\newcommand*\colvec[1]{
        \global\colveccount#1
        \begin{bmatrix}
        \colvecnext
}
\def\colvecnext#1{
        #1
        \global\advance\colveccount-1
        \ifnum\colveccount>0
                \\
                \expandafter\colvecnext
        \else
                \end{bmatrix}
        \fi
}
\renewcommand\d[1]{\mspace{6mu}\mathrm{d}#1\@ifnextchar\d{\mspace{-3mu}}{}}
\newcommand{\Bk}{\color{black}}
\newcommand{\bfU}{{\boldsymbol U}}
\newcommand{\bfV}{{\boldsymbol V}}
\newcommand{\bfu}{{\boldsymbol u}}
\newcommand{\bff}{{\boldsymbol f}}
\newcommand{\bfx}{{\boldsymbol x}}
\newcommand{\bfv}{{\boldsymbol v}}
\newcommand{\bfg}{{\boldsymbol g}}
\newcommand{\bfn}{{\boldsymbol n}}
\newcommand{\D}{{\mathrm D}}
\newcommand{\N}{{\mathrm N}}
\newcommand{\DG}{{\mathrm D \mathrm G}}
\newtheorem{proposition}{Proposition}
\newtheorem{lemma}{Lemma}
\begin{document}
\makeatletter
\def\ps@pprintTitle{%
  \let\@oddhead\@empty
  \let\@evenhead\@empty
  \let\@oddfoot\@empty
  \let\@evenfoot\@oddfoot
}
\makeatother
\begin{frontmatter}

\title{A Sequential Discontinuous Galerkin Method for Two-Phase Flow in Deformable Porous Media\tnoteref{mytitlenote}}

\author[riceAddress]{Boqian Shen}
\author[riceAddress]{Beatrice Riviere}

%

\address[riceAddress]{Rice University, 6100 Main St, Houston, TX 77005. Funding by NSF-DMS 1913291 is acknowledged.}

\begin{abstract}
We formulate a numerical method for solving the two-phase flow poroelasticity equations. The scheme employs
the interior penalty discontinuous Galerkin method and a sequential time-stepping method.  
The unknowns are the phase pressures and the displacement.  Existence of the solution is proved.  Three-dimensional numerical results show
the accuracy and robustness of the proposed method.
\end{abstract}

\begin{keyword}
two-phase poroelasticity \sep sequential implicit \sep discontinuous Galerkin \sep heterogeneities
\end{keyword}

\end{frontmatter}

\section{Introduction}

The field of poromechanics pertains to the study of coupled fluid flows and mechanical deformations in porous media.  Applications include the prediction of land subsidence due to extraction of water and/or hydrocarbons from  subsurface \cite{faunt2016water}.  
Mathematical models of the poroelastic two-phase flow problem can be found in \cite{lewis1998finite} and were 
derived by Biot \cite{biot1941general,biot1956theory} using a phenomenological approach.   
In the case of single phase flow, the poroelasticity equations have been extensively studied by applied mathematicians and engineers in the scientific literature \cite{muradloula,BarryMercer1998,WheelerGai,phillips2008coupling,yi2013coupling,ChaabaneRiviere2017b}. In contrast, there are very few works on the design of efficient numerical methods for multiphase flows in deformable porous media.
The main contribution of this work is the formulation of a numerical method that employs discontinuous piecewise polynomial approximations
for the wetting and non-wetting phase pressures and the displacement of the medium.
At each time step, the mass balance equations and the momentum equation are sequentially solved.
Stabilization terms are added to the discrete momentum equation, in the same spirit as what was done in \cite{ChaabaneRiviere2017} for
single phase flow in deformable porous media.

In this work, we focus on isothermal flows where inertial forces are neglected.  
The resulting coupled
partial differential equations can be solved fully implicit, iteratively or sequentially \cite{dean2006comparison}. 
Fully implicit finite element 
methods are the most stable ones but also the most computationally expensive.
In \cite{schrefler1993fully}, finite element methods
in space are combined with the theta method in time and the resulting system is solved by Newton-Raphson's method at each time step.
The method is applied to one-dimensional and two-dimensional problems. In \cite{yang2014fully}, fully implicit mixed finite element methods combined with standard finite element methods  are
applied to solve for pressure, saturation,  displacement and their gradients in two-dimensional problems. 
The iterative approach (fixed-stress split) is combined with finite volume methods in \cite{asadi2015comparison} 
for different choices of primary unknowns and for one-dimensional problems.
Our approach is novel in the sense that no iterations are needed for stability. At each time step, each equation is solved separately and
the computational cost is smaller than the one for fully implicit methods. We apply the proposed method to three-dimensional problems
and we study the impact of heterogeneities (discontinuous capillary pressure) and loading on the propagation of the fluid phases in the medium.
Finally, we point out that fully implicit finite element method
has been applied to more complex dynamic and non-isothermal flows in \cite{lizienkiewicz1990,schrefler2001fully,gawinbaggio,khoei2020thermo}.


%
%

An outline of the paper follows. Section~\ref{sec:problem} introduces the mathematical model and the assumptions on the input
data. The numerical algorithm is described and analyzed in Section~\ref{sec:scheme}.  Numerical results, including convergence rates and
validation of the method by benchmark problems, can be found in Section~\ref{sec:numer}. Conclusions follow.

\section{Model Problem}
\label{sec:problem}
\label{sec:model}

Mathematical models for compressible two-phase flow poroelasticity are described by two mass conservation equations
coupled by a momentum conservation equation \cite{lewis1998finite}.  Let $p_w, s_w$ (resp. $p_o, s_o$) denote the wetting (resp. non-wetting) phase pressure and saturation respectively and let $\bfu$ denote the displacement of the porous medium $\Omega\subset\mathbb{R}^3$. 
By definition, $s_o=1-s_w$, and we use this relation to eliminate the non-wetting phase saturation from the system of equations. 
The difference between phase pressures is the capillary pressure, $p_c$, which is a given nonlinear function of $s_w$, according to the Brooks-Corey model \cite{brookscorey}:
\begin{equation}\label{eq:pc}
p_c = p_c(s_w) = p_o - p_w, \quad s_w =  \left(\frac{p_d}{p_c}\right)^2, 
\end{equation}
where $p_d>0$ is a constant entry pressure.
We choose for primary unknowns the phase pressures and the displacement. 
The nonlinear model coupling flow and deformation can be described by the following equations: 
\begin{align}
\mathcal{C}_1(p_o,p_w) \frac{\partial p_w}{\partial t} + \mathcal{C}_2(p_o,p_w) \frac{\partial p_o}{\partial t} 
-  \nabla \cdot (\lambda_w(s_w) K \nabla p_w)
+ \alpha s_w \frac{\partial (\nabla \cdot\bfu)  }{\partial t} = f_w,\label{eq:pb1}\\
\mathcal{C}_3(p_o,p_w) \frac{\partial p_o}{\partial t} + \mathcal{C}_4(p_o,p_w) \frac{\partial p_w}{\partial t} 
- \nabla \cdot (\lambda_o(s_w) K \nabla p_o)
+ \alpha (1-s_w) \frac{\partial (\nabla \cdot\bfu)  }{\partial t} = f_o, \label{eq:pb2}\\
-\mu \Delta \bfu - (\lambda+\mu)\nabla (\nabla \cdot \bfu) + \nabla   (s_w p_w + (1-s_w) p_o)   = \bff_\bfu. \label{eq:pb3}
\end{align}
The mass balance equations for the wetting and non-wetting phase are \eqref{eq:pb1} and \eqref{eq:pb2} respectively 
whereas \eqref{eq:pb3} represents the momentum equation for quasi-static elastic deformation of the medium. The coefficients $C_i$ are nonlinear functions of the phase pressures  (see \eqref{eq:pc}):
\begin{align}
\mathcal{C}_1(p_o,p_w) =& \frac{\alpha-\phi}{K_s} s_w^2 + \frac{\phi s_w}{K_w} + \left( \frac{\alpha-\phi}{K_s} s_w p_c - \phi\right) \frac{d s_w}{d p_c},\\
\mathcal{C}_2(p_o,p_w) =& \frac{\alpha-\phi}{K_s} s_w(1-s_w) - \left( \frac{\alpha-\phi}{K_s} s_w p_c  -\phi\right) \frac{d s_w}{d p_c},\\
\mathcal{C}_3(p_o,p_w) =& \frac{\alpha-\phi}{K_s} (1-s_w)^2 + \frac{\phi (1-s_w)}{K_o} - \left( \frac{\alpha-\phi}{K_s} (1-s_w) p_c + \phi\right) \frac{d s_w}{d p_c},\\
\mathcal{C}_4(p_o,p_w) =& \frac{\alpha-\phi}{K_s} s_w(1-s_w) + \left( \frac{\alpha-\phi}{K_s} (1-s_w) p_c +\phi\right) \frac{d s_w}{d p_c}.
\end{align}
We describe briefly the different coefficients in the equations above. The absolute permeability field $K$ and the
porosity field $\phi$ are given positive scalar functions; $K$ may be discontinuous and vary in space over several orders of magnitude.
Other input data are known constants: the Biot-Willis constant $\alpha$; the bulk moduli for the solid structure
and the fluid phases, $K_s, K_w, K_o$;  the Lam\'e parameters $\lambda, \mu$; and the phase viscosities $\mu_w$ and $\mu_o$. 
The phase mobilities, $\lambda_w, \lambda_o$,  are the ratios of
the phase relative permeability $k_{ri}$ to the phase viscosity $\mu_i$ and they are given functions of the saturation:
\begin{equation}\label{eq:relperm}
\lambda_i(s_w) = \frac{k_{ri}(s_w)}{\mu_i}, \quad i=w,o, \quad  k_{rw}(s_w) = s_w^4,  \quad k_{ro}(s_w) = (1-s_w)^2(1-s_w^2).
\end{equation}
The Biot-Willis constant $\alpha$ is close to $1$. 
For realistic porous media with porosity less than  $0.5$, this implies that
the quantity $(\alpha - \phi)$ is non-negative. 
The porous medium is such that the  bulk modulus for
the solid is much larger than the capillary pressure, and thus we assume that
\[
\frac{p_c}{K_s} << 1.
\]
This implies that 
\[
\frac{\alpha-\phi}{K_s} s_w p_c - \phi \leq 0.
\]
From \eqref{eq:pc}, we see that the derivative $s_w'(p_c)$ is negative. 
Therefore, with the assumptions above, we can determine the sign of two of the scalar functions $\mathcal{C}_i(p_o,p_w)$.
\begin{equation}\label{eq:nonneg}
\mathcal{C}_1(p_o,p_w) \geq 0, \quad \mathcal{C}_3(p_o,p_w) \geq 0.
\end{equation}
This motivates the use of a sequential scheme where \eqref{eq:pb1} is solved for $p_w$ and \eqref{eq:pb2} is solved for $p_o$.
The equations \eqref{eq:pb1}-\eqref{eq:pb3} are completed by initial and boundary conditions.
\begin{eqnarray}
p_w & = & p_{w}^0,\quad \mbox{in} \quad \Omega\times \{0\},\\
p_o & = & p_{o}^0,\quad \mbox{in} \quad \Omega\times \{0\},\\
\bfu & = & \bfu^0,\quad \mbox{in} \quad \Omega\times \{0\}.
\end{eqnarray}
The boundary of the medium is decomposed into Dirichlet and Neumann parts for pressures and displacement:
\[
\partial\Omega = \Gamma_{p\D}\cup\Gamma_{p\N} = \Gamma_{\bfu\D}\cup\Gamma_{\bfu\N}.
\]
Boundary data are prescribed by the following conditions: 
\begin{align}
p_w = & p_{w\D}, \quad p_o = p_{o\D}, \quad \mbox{on}\quad \Gamma_{p\D}\times (0,T),\\
\lambda_w(s_w) K \nabla p_w\cdot\bfn  = & g_{w}, \quad \lambda_o(s_w) K \nabla p_o \cdot\bfn = g_o, \quad \mbox{on}\quad \Gamma_{p\D}\times (0,T),\\
\bfu = & \bfu_\D, \quad \mbox{on}\quad \Gamma_{\bfu\D}\times (0,T),\\
\mu \nabla \bfu \, \bfn + (\lambda+\mu) (\nabla \cdot\bfu)\bfn
 = & \bfg_\bfu, \quad \mbox{on}\quad \Gamma_{\bfu\N}\times (0,T).
\end{align}

\section{Discontinuous Galerkin Scheme}
\label{sec:scheme}

The equations are discretized by the interior penalty discontinuous Galerkin method.  Let $\mathcal{E}_h$ be a partition of the domain
made of tetrahedral elements of maximum diameter $h$.  Let $\Gamma_h$ denote the set of interior faces. For any interior face $e$, we fix
a unit normal vector $\bfn_e$ and we denote by $E_e^1$ and $E_e^2$ the two tetrahedra that share the face $e$ such that the vector
$\bfn_e$ points from $E_e^1$ into $E_e^2$. The jump and average of a function $q$ across an interior face $e$ are denoted by
$[q]$ and $\{q\}$ respectively:
\[
[q] = q|_{E_e^1}-q|_{E_e^2}, \quad \{q\}=\frac12 \left( q|_{E_e^1}+q|_{E_e^2}\right), \quad \forall e=\partial E_e^1\cap\partial E_e^2.
\]
The jump and average of $q$ on a boundary face are, by convention, equal to the trace of $q$:
\[
[q] = q|_{e}, \quad \{q\}=q|_e, \quad \forall e\subset\partial\Omega.
\]
The DG spaces, denoted by $Q_h$ and $\bfV_h$, consist of discontinuous piecewise linears:
\[
Q_h=\{ q\in L^2(\Omega): \, q|_E \in \mathbb{P}_1(E), \, \forall E\in\mathcal{E}_h\}, \quad
\bfV_h = Q_h\times Q_h\times Q_h.
\]
We denote by $\Pi$ the cut-off operator that restricts any function $q$ to the interval $[0,1]$. The parameter $\epsilon$ is chosen equal
to $10^{-8}$ in our numerical results.
\[
\Pi(q)(\bfx) = \left\{
\begin{array}{lr}
1-\epsilon  \Bk & \mbox{if } q(\bfx) > 1-\epsilon,\\
q(\bfx) & \mbox{if } 0\leq q(\bfx) \leq 1,\\
\epsilon \Bk & \mbox{if } q(\bfx) < \epsilon.
\end{array}
\right.
\]
Let $0=t_0<t_1<\dots<t_N=T$ be a partition of the time interval $(0,T)$. For reasons that will be apparent below, we choose two time step values $\tau_0$ and $\tau$ and we define
\[
t_1 = \tau_0, \quad t_n = t_1+(n-1)\tau, \quad \forall n\geq 2.
\]
Let $P_w^n, P_o^n$ and $\bfU^n$ denote the DG approximations of $p_w, p_o$ and $\bfu$ evaluated at time $t_n$. We define
\begin{equation}
S_w^n = \Pi(p_c^{-1}(P_o^n-P_w^n)), \quad \forall n\geq 1.
\label{eq:truncatedsat}
\end{equation}
The scheme consists of three sequential steps for $n\geq 1$:\\
Step 1: Given $P_w^n\in Q_h$, $P_o^n, P_o^{n-1}\in Q_h$ and $\bfU^n,\bfU^{n-1}\in \bfV_h$, find $P_w^{n+1}\in Q_h$ such that
\begin{align}
\left(\mathcal{C}_1(P_o^n,P_w^n) \frac{P_w^{n+1}-P_w^n}{\tau} + \mathcal{C}_2(P_o^n,P_w^n) \frac{P_o^n-P_o^{n-1}}{\tau}, q_h\right)_\Omega
+ a(\lambda_w^n K; P_w^{n+1},q_h) \nonumber\\
+ \alpha  b_\bfu(S_w^n;\frac{\bfU^{n}-\bfU^{n-1}}{\tau},q_h) = \ell_w(t_{n+1};q_h),\quad \forall q_h\in Q_h.\label{eq:discpb1}
\end{align}
Step 2: Given $P_o^n\in Q_h$, $P_w^n, P_w^{n+1}\in Q_h$ and $\bfU^n, \bfU^{n-1}\in \bfV_h$, find $P_o^{n+1}\in Q_h$ such that
\begin{align}
\left(\mathcal{C}_3(P_o^n,P_w^n) \frac{P_o^{n+1}-P_o^n}{\tau} + \mathcal{C}_4(P_o^n,P_w^n) \frac{P_w^{n+1}-P_w^n}{\tau},q_h \right)_\Omega
+ a(\lambda_o^n K; P_o^{n+1},q_h) \nonumber\\
+ \alpha b_\bfu(1-S_w^n;\frac{\bfU^{n}-\bfU^{n-1}}{\tau},q_h)  = \ell_o(t_{n+1};q_h), \quad \forall q_h\in Q_h.
\label{eq:discpb2}
\end{align}
Step 3: Given $P_o^{n+1}, P_w^{n+1} \in Q_h$ and $\bfU^{n}, \bfU^{n-1}\in\bfV_h$, find $\bfU^{n+1}\in\bfV_h$ such that
\begin{align}
c(\bfU^{n+1},\bfv_h) + b_p(S_w^{n+1} P_w^{n+1}+(1-S_w^{n+1}) P_o^{n+1},\bfv_h) 
+\gamma \left(\frac{\bfU^{n+1}-\bfU^n}{\tau },\bfv\right)_\Omega
\nonumber\\
-\gamma \left(\frac{\bfU^{n}-\bfU^{n-1}}{\tau },\bfv\right)_\Omega
= \ell_\bfu(t_{n+1};\bfv_h), \quad
\forall \bfv_h\in\bfV_h.  \label{eq:discpb3}
\end{align}
In \eqref{eq:discpb1}, \eqref{eq:discpb2}, the coefficients $\lambda_w^n, \lambda_o^n$ are the functions $\lambda_w$
and $\lambda_o$ evaluated at $S_w^n$. In \eqref{eq:discpb3}, the parameter $\gamma$ is a positive constant that is user-specified and
that multiplies a stabilization term involving the discrete displacements.  The numerical scheme \eqref{eq:discpb1}-\eqref{eq:discpb3} is sequential as the flow and displacement equations are solved separately. However, each equation is solved implicitely with respect to its primary unknown ($P_w^{n+1}$ for \eqref{eq:discpb1}, $P_o^{n+1}$ for \eqref{eq:discpb2} and $\mathbf{U}^{n+1}$ for \eqref{eq:discpb3}). One novel contribution of this work is the use of the stabilization term that multiplies $\gamma$; this term is required for convergence of the method. For single-phase flow in deformable porous media, stability and convergence of the scheme is obtained if $\gamma$ is sufficiently large \cite{ChaabaneRiviere2017}.  The convergence proof for the case of two-phase flow in deformable porous media remains an open question.

The $L^2$ inner-product over $\Omega$ is denoted by $(\cdot,\cdot)_\Omega$. Similary, we use the notation
$(\cdot,\cdot)_E$ and $(\cdot,\cdot)_e$ for the $L^2$ inner-product over an element $E$ and a face $e$. We now describe the forms $a(\cdot;\cdot,\cdot), b_\bfu(\cdot;\cdot,\cdot)$, 
$c(\cdot,\cdot), b_p(\cdot,\cdot)$ that correspond to the discretizations of the differential operators in the mathematical model.
For the operator of the form $ \chi \nabla \cdot \bfu$ with $\chi$ being a scalar-valued function, we propose the following discretization:
\[
b_\bfu(\chi;\bfu,q) = -\sum_{E\in\mathcal{E}_h} (\bfu,\nabla (\chi q))_E
+ \sum_{e\in\Gamma_h\cup\partial\Omega} (\{\bfu\cdot\bfn_e\}, [\chi q])_e.
\]
For the operator of the form $\nabla  q$, we apply the following discretization:
\[
b_p(q,\bfv) = \sum_{E\in\mathcal{E}_h} (\nabla q, \bfv)_E
- \sum_{e\in\Gamma_h} ([q],\{\bfv\cdot\bfn_e\})_e.
\]
For the operator of the form $-\nabla\cdot(\chi\nabla p)$ with $\chi$ being a scalar-valued function, we utilize the standard interior penalty DG form:
\begin{eqnarray*}
a(\chi;p,q) &=& \sum_{E\in\mathcal{E}_h} (\chi\nabla p,\nabla q)_E
+ \sum_{e\in\Gamma_h\cup\Gamma_{p\D}} \sigma_p h_e^{-1} ([p],[q])_e \\
&& -\sum_{e\in\Gamma_h\cup\Gamma_{p\D}} (\{ \chi \nabla p\} \cdot\bfn_e, [q])_e
+\epsilon_p \sum_{e\in\Gamma_h\cup\Gamma_{p\D}} (\{ \chi \nabla q\}\cdot\bfn_e, [p])_e.
\end{eqnarray*}

The scalar $\epsilon_p$ is either equal to $-1$ or to $+1$ to yield a symmetric or non-symmetric bilinear form.
The penalty parameter $\sigma_p$ is a positive constant: it has to be sufficiently large if $\epsilon_p=-1$ \cite{Riviere2008}.
The discretization of the operator $-\mu\Delta \bfu - (\lambda+\mu) \nabla (\nabla \cdot\bfu)$ is also recalled:
\begin{eqnarray*}
c(\bfu,\bfv) &=& \mu \sum_{E\in\mathcal{E}_h} (\nabla \bfu,\nabla \bfv)_E
+ \mu \sum_{e\in\Gamma_h\cup\Gamma_{\bfu\D}} \sigma_\bfu h_e^{-1} ([\bfu],[\bfv])_e \\
&& -\mu \sum_{e\in\Gamma_h\cup\Gamma_{\bfu\D}} (\{  \nabla \bfu\} \bfn_e, [\bfv])_e
+\epsilon_\bfu \mu \sum_{e\in\Gamma_h\cup\Gamma_{\bfu\D}} (\{  \nabla \bfv\}\bfn_e, [\bfu])_e
\\
&& + (\lambda+\mu) \sum_{E\in\mathcal{E}_h} (\nabla\cdot\bfu,\nabla \cdot\bfv)_e
- (\lambda+\mu) \sum_{e\in\Gamma_h\cup\Gamma_{\bfu\D}} (\{\nabla \cdot \bfu\}, [\bfv\cdot\bfn_e])_e.
\end{eqnarray*}
The forms $\ell_w, \ell_o$ and $\ell_\bfu$ handle the source/sink functions, external forces and boundary conditions.
\begin{eqnarray*}
\ell_w(t_{n+1};q_h) &=& (f_w(t_{n+1}),q_h)_\Omega
+\epsilon_p \sum_{e\in\Gamma_{p\D}} (\lambda_w^n K \nabla q_h\cdot\bfn_e,p_{w\D}(t_{n+1}))_e
\\
&&+ \sum_{e\in\Gamma_{p\N}} (g_w(t_{n+1}),q_h)_e
+\sum_{e\in\Gamma_{p\D}} \sigma_p h_e^{-1} (p_{w\D}(t_{n+1}), q_h)_e,
\end{eqnarray*}
\begin{eqnarray*}
\ell_o(t_{n+1};q_h) &=& (f_o(t_{n+1}),q_h)_\Omega
+\epsilon_p \sum_{e\in\Gamma_{p\D}} (\lambda_o^n K \nabla q_h\cdot\bfn_e,p_{o\D}(t_{n+1}))_e
\\
&&+ \sum_{e\in\Gamma_{p\N}} (g_o(t_{n+1}),q_h)_e
+\sum_{e\in\Gamma_{p\D}} \sigma_p h_e^{-1} (p_{o\D}(t_{n+1}), q_h)_e,
\end{eqnarray*}
\begin{eqnarray*}
\ell_\bfu(t_{n+1};\bfv_h) &=& (\bff_\bfu(t_{n+1}),\bfv_h)_\Omega
+\epsilon_\bfu \mu \sum_{e\in\Gamma_{\bfu\D}} (\nabla \bfv_h \, \bfn_e, \bfu_\D(t_{n+1}))_e
\\
&&+ \sum_{e\in\Gamma_{\bfu\N}} (\bfg_\bfu(t_{n+1}),\bfv_h)_e
+\sum_{e\in\Gamma_{\bfu\D}} \sigma_\bfu h_e^{-1} (\bfu_\D(t_{n+1}),\bfv_h)_e.
\end{eqnarray*}
In order to start the algorithm, the solutions at times $t_0$ and $t_1$ are to be computed.
The initial values are chosen to be the $L^2$ projections of the initial data.
\[
(P_w^0,q_h)_\Omega = (p_w^0,q_h)_\Omega, \quad
(P_o^0,q_h)_\Omega = (p_o^0,q_h)_\Omega, \quad
(\bfU^0,\bfv_h)_\Omega = (\bfu^0,\bfv_h)_\Omega, \quad \forall q_h\in Q_h, \, \forall \bfv_h\in\bfV_h.
\]

To obtain $P_w^1$ we solve a modified flow equation:
\begin{equation}
(\mathcal{C}_1(P_o^0,P_w^0) \frac{P_w^{1}-P_w^0}{\tau_0}, q_h)_\Omega
+ a(\lambda_w^0 K; P_w^{1},q_h) 
 = \ell_w(t_1;q_h).\quad \forall q_h\in Q_h.\label{eq:initdiscpb1}
\end{equation}
Once $P_w^1$ is computed, we can solve for $P_w^0$ satisfying
\begin{align}
(\mathcal{C}_3(P_o^0,P_w^0) \frac{P_o^{1}-P_o^0}{\tau_0},q_h)_\Omega 
+ a(\lambda_o^0 K; P_o^{1},q_h) 
  = \ell_o(t_1;q_h)
- (\mathcal{C}_4(P_o^0,P_w^0) \frac{P_w^{1}-P_w^0}{\tau_0},q_h)_\Omega, \quad \forall q_h\in Q_h.
\label{eq:initdiscpb2}
\end{align}
Because $\tau_0$ is chosen to be much smaller than $\tau$, the consistency errors due to the modified equations \eqref{eq:initdiscpb1} and \eqref{eq:initdiscpb2} will be negligible
compared to the numerical errors for all time steps $n\geq 2$.
Finally, to compute the displacement $\bfU^1$, equation~\eqref{eq:discpb3} is used without the stabilization terms.  This yields a consistent discretization for the displacement at time step $t_1$.
\begin{align}
c(\bfU^{1},\bfv_h) = \ell_\bfu(t_{1};\bfv_h) -b_p(S_w^{1} P_w^{1}+(1-S_w^{1}) P_o^{1},\bfv_h)  
, \quad
\forall \bfv_h\in\bfV_h.  \label{eq:initdiscpb3}
\end{align}
Define the DG norm for discrete pressures:
\[
\Vert q_h \Vert_{\mathrm{DG}} = \left( \sum_{E\in\mathcal{E}_h} \Vert \nabla q_h\Vert_{L^2(E)}^2 
+ \sum_{e\in\Gamma_h\cup\Gamma_{p\D}} h_e^{-1} \Vert [q_h]\Vert_{L^2(e)}^2 \right)^{1/2}, \quad \forall q_h\in Q_h.
\]
A similar norm is defined for vector-valued functions $\bfv_h\in\bfV_h$; it differs by the boundary terms.
\[
\Vert \bfv_h \Vert_{\mathrm{DG}} = \left( \sum_{E\in\mathcal{E}_h} \Vert \nabla \bfv_h\Vert_{L^2(E)}^2 
+ \sum_{e\in\Gamma_h\cup\Gamma_{\bfu\D}} h_e^{-1} \Vert [\bfv_h]\Vert_{L^2(e)}^2 \right)^{1/2}, \quad \forall \bfv_h\in \bfV_h.
\]
We now recall the coercivity properties for the bilinear forms $a$ and $c$.  
\begin{lemma}\label{lem:coerc}
Let $\chi$ be a scalar-valued function bounded below and above by  positive constants $C_{\underline{\chi}}$ and $C_{\overline{\chi}}$. If $\epsilon_p = -1$, assume that $\sigma_p$ is sufficiently large. The following holds:
\begin{equation}\label{eq:aPcoer}
\frac12 \Vert q_h \Vert_{\mathrm{DG}}^2 \leq a(\chi;q_h,q_h), \quad \forall q_h\in Q_h.
\end{equation}
In addition, assume the penalty parameter $\sigma_{\bfu}$ is sufficiently large. Then we have
\begin{equation}\label{eq:cUcoer}
\frac12 \Vert \bfv_h \Vert_{\DG}^2 \leq c(\bfv_h,\bfv_h), \quad \forall \bfv_h\in\bfV_h.
\end{equation}
\end{lemma}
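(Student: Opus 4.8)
The plan is to prove each estimate by testing the form against its own argument and showing that the face (consistency) terms are absorbed by the volume and penalty terms. First I would set $p=q=q_h$ in $a(\chi;\cdot,\cdot)$; the two consistency terms combine into a single multiple of the same integral, giving
\[
a(\chi;q_h,q_h)=\sum_{E\in\mathcal{E}_h}(\chi\nabla q_h,\nabla q_h)_E+\sigma_p\sum_{e}h_e^{-1}\Vert[q_h]\Vert_{L^2(e)}^2+(\epsilon_p-1)\sum_{e}(\{\chi\nabla q_h\}\cdot\bfn_e,[q_h])_e,
\]
where the face sums run over $\Gamma_h\cup\Gamma_{p\D}$. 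For the non-symmetric choice $\epsilon_p=+1$ the last sum vanishes, and coercivity is immediate from the lower bound $\chi\ge C_{\underline{\chi}}$ and the positivity of the penalty term, the coercivity constant being controlled by $C_{\underline{\chi}}$ and $\sigma_p$.

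For the symmetric choice $\epsilon_p=-1$ the obstacle is the surviving term $-2\sum_e(\{\chi\nabla q_h\}\cdot\bfn_e,[q_h])_e$. I would bound each face contribution by Young's inequality with a parameter $\delta>0$, then invoke the discrete trace inequality $\Vert\nabla q_h\cdot\bfn_e\Vert_{L^2(e)}^2\le C_t\,h_e^{-1}\Vert\nabla q_h\Vert_{L^2(E)}^2$ on each element $E$ adjacent to $e$, together with the upper bound $\chi\le C_{\overline{\chi}}$, to control $\Vert\{\chi\nabla q_h\}\cdot\bfn_e\Vert_{L^2(e)}^2$ by $C_{\overline{\chi}}^2 C_t\,h_e^{-1}$ times the element gradients of the neighbouring tetrahedra. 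Because each tetrahedron carries a fixed number of faces, the sum over $e$ reproduces each volume gradient a bounded number of times. The gradient contribution is then absorbed into $C_{\underline{\chi}}\sum_E\Vert\nabla q_h\Vert_{L^2(E)}^2$ by choosing $\delta$ small, and the jump contribution, which carries a factor $\delta^{-1}$, is absorbed into the penalty by taking $\sigma_p$ sufficiently large; this yields \eqref{eq:aPcoer}.

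The estimate \eqref{eq:cUcoer} follows the same pattern for the viscous (Laplacian) block of $c$, with $\mu$ playing the role of $C_{\underline{\chi}}$ and $\sigma_\bfu$ the role of $\sigma_p$. The new feature is the grad--div block. Testing with $\bfu=\bfv=\bfv_h$ leaves the nonnegative term $(\lambda+\mu)\sum_E\Vert\nabla\cdot\bfv_h\Vert_{L^2(E)}^2$ together with a single unsymmetric face term $-(\lambda+\mu)\sum_e(\{\nabla\cdot\bfv_h\},[\bfv_h\cdot\bfn_e])_e$. I would again apply Young and the trace inequality: the divergence part is absorbed into $(\lambda+\mu)\sum_E\Vert\nabla\cdot\bfv_h\Vert^2$, and the jump part, after using $\Vert[\bfv_h\cdot\bfn_e]\Vert_{L^2(e)}\le\Vert[\bfv_h]\Vert_{L^2(e)}$, is absorbed into the penalty $\mu\sigma_\bfu\sum_e h_e^{-1}\Vert[\bfv_h]\Vert^2$. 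The leftover nonnegative divergence term is then discarded, producing the stated bound.

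The main obstacle throughout is the control of the face flux terms by the element gradients via the discrete trace inequality, and the accompanying bookkeeping of the penalty size: in the symmetric case $\sigma_p$ (resp. $\sigma_\bfu$) must dominate the constants generated by Young's inequality, and for $c$ the parameter $\sigma_\bfu$ must additionally be taken large enough to absorb the jump contribution coming from the grad--div coupling, whose prefactor scales like $(\lambda+\mu)/\mu$.
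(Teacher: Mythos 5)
Your argument is correct and is precisely the classical interior-penalty coercivity proof (test with the argument, combine the consistency terms, then Young's inequality plus the discrete trace inequality to absorb the face fluxes into the volume and penalty terms, with the grad--div jump forcing $\sigma_\bfu$ to scale like $(\lambda+\mu)/\mu$). The paper does not write this proof out at all --- it declares it classical and cites R\`iviere's monograph, noting only that $\sigma_p$ depends on trace constants and $C_{\underline{\chi}}, C_{\overline{\chi}}$ and that $\sigma_\bfu$ depends on the Lam\'e parameters, which is exactly the dependence your bookkeeping produces.
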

The proof of Lemma~\ref{lem:coerc} is classical and is therefore skipped \cite{Riviere2008}. If $\epsilon_p = -1$, the constant $\sigma_p$ depends on trace constants and the constants $C_{\underline{\chi}}$ and $C_{\overline{\chi}}$.  Similarly, the penalty parameter $\sigma_{\bfu}$ depends on trace constants and on the Lam\'e parameters.

Next we show that the discrete equations are solvable under some conditions on the phase mobilities.
\begin{proposition}
Assume that the functions $\lambda_w$ and $\lambda_o$ are bounded below by positive constants.
For any $n\geq 0$, the solutions $(P_w^n, P_o^n, \bfU^n)$ exist and are unique. 
\end{proposition}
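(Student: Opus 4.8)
The plan is to exploit the sequential structure of the scheme. Although the full coupled system is nonlinear, within each of the three steps every nonlinear coefficient is frozen at data already known from the current or previous time levels, so each subproblem is a \emph{linear} problem posed on the finite-dimensional space $Q_h$ (or $\bfV_h$). For a linear problem in finite dimensions, existence and uniqueness are equivalent and both follow once the associated bilinear form is shown to be coercive (positive definite) on the relevant DG space; equivalently, it suffices to check that the corresponding homogeneous problem has only the trivial solution. I would run the argument as an induction on $n$: the initial data $P_w^0,P_o^0,\bfU^0$ are $L^2$ projections and hence uniquely defined, and given $(P_w^m,P_o^m,\bfU^m)$ for $m\leq n$, I would produce $P_w^{n+1}$, then $P_o^{n+1}$, then $\bfU^{n+1}$, in that order.

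First I would treat Step~1. Moving all known quantities in \eqref{eq:discpb1} to the right-hand side, the unknown $P_w^{n+1}$ satisfies a linear problem whose bilinear form is
\[
A_1(P,q)=\Bigl(\tfrac{1}{\tau}\mathcal{C}_1(P_o^n,P_w^n)\,P,\,q\Bigr)_\Omega + a(\lambda_w^n K;P,q).
\]
Testing with $q=P$, the first term is nonnegative by \eqref{eq:nonneg}, while Lemma~\ref{lem:coerc} gives $a(\lambda_w^n K;P,P)\geq \tfrac12\Vert P\Vert_{\mathrm{DG}}^2$, so $A_1(P,P)\geq \tfrac12\Vert P\Vert_{\mathrm{DG}}^2$ and $A_1$ is coercive. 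Step~2 is identical in spirit: equation \eqref{eq:discpb2} is linear in $P_o^{n+1}$ (note that $P_w^{n+1}$ computed in Step~1 now enters only the data), its bilinear form is $(\tfrac{1}{\tau}\mathcal{C}_3(P_o^n,P_w^n)\,P,q)_\Omega+a(\lambda_o^n K;P,q)$, and coercivity follows from $\mathcal{C}_3\geq 0$ in \eqref{eq:nonneg} together with \eqref{eq:aPcoer}.

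Next I would handle Step~3. Equation \eqref{eq:discpb3} is linear in $\bfU^{n+1}$ with bilinear form $A_3(\bfU,\bfv)=c(\bfU,\bfv)+\tfrac{\gamma}{\tau}(\bfU,\bfv)_\Omega$; testing with $\bfv=\bfU$ and invoking \eqref{eq:cUcoer} yields $A_3(\bfU,\bfU)\geq \tfrac12\Vert\bfU\Vert_{\mathrm{DG}}^2+\tfrac{\gamma}{\tau}\Vert\bfU\Vert_{L^2(\Omega)}^2$, which is coercive since $\gamma>0$. The start-up pressure problems \eqref{eq:initdiscpb1}--\eqref{eq:initdiscpb2} are covered verbatim by the Step~1/Step~2 arguments with $\tau$ replaced by $\tau_0$, and the start-up displacement \eqref{eq:initdiscpb3} uses the coercivity of $c$ alone; here there is no $\gamma$-term, so one relies solely on \eqref{eq:cUcoer}, which is available provided $\Gamma_{\bfu\D}$ is nonempty so that $\Vert\cdot\Vert_{\mathrm{DG}}$ is a genuine norm on $\bfV_h$.

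The part requiring the most care -- and where the stated hypothesis is used -- is verifying that the coefficients fed into Lemma~\ref{lem:coerc} are bounded below and above by positive constants. The lower bound on $\lambda_w^n K$ and $\lambda_o^n K$ is precisely where the assumption that $\lambda_w,\lambda_o$ are bounded below by positive constants enters; combined with the positivity of the permeability field $K$ and the fact that the cut-off operator $\Pi$ keeps $S_w^n\in[\epsilon,1-\epsilon]$ (so the relative permeabilities remain bounded above), the products $\lambda_w^n K$ and $\lambda_o^n K$ lie between fixed positive constants and the hypotheses of Lemma~\ref{lem:coerc} are met. I would also remark that, because the contributions $\tfrac{\phi s_w}{K_w}$ and $\tfrac{\phi(1-s_w)}{K_o}$ are strictly positive, $\mathcal{C}_1$ and $\mathcal{C}_3$ are in fact bounded below by positive constants, so coercivity of the two pressure forms does not even require $\Gamma_{p\D}$ to be nonempty. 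With coercivity in hand for every subproblem, a positive-definite bilinear form on a finite-dimensional space corresponds to an invertible matrix (equivalently, apply Lax--Milgram), so each step is uniquely solvable for any right-hand side, and the induction delivers existence and uniqueness of $(P_w^n,P_o^n,\bfU^n)$ for all $n\geq 0$.
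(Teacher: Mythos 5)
Your proposal is correct and follows essentially the same route as the paper: each subproblem is linear on a finite-dimensional space, so solvability reduces to showing the associated form is positive definite, which follows from the coercivity bounds \eqref{eq:aPcoer}--\eqref{eq:cUcoer} together with the sign conditions $\mathcal{C}_1,\mathcal{C}_3\geq 0$ in \eqref{eq:nonneg} and, for the displacement, the positive $\gamma$-term. The paper phrases this as ``the homogeneous problem has only the trivial solution'' by testing the difference of two solutions against itself, which is the same computation; your added remarks on verifying the upper and lower bounds of $\lambda_w^n K$, $\lambda_o^n K$ via the cut-off operator are a reasonable elaboration of details the paper leaves implicit.
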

\begin{proof}
Existence and uniqueness of the initial solutions $(P_w^0, P_o^0, \bfU^0)$ is immediate  because of the $L^2$ projection operator.
Regarding the solutions at time $t_1$, since \eqref{eq:initdiscpb1}, \eqref{eq:initdiscpb2}, \eqref{eq:initdiscpb3} are linear problems in finite dimension, it suffices to show uniqueness.  The proof is an immediate consequence of the coercivity Lemma~\ref{lem:coerc} and the
non-negative signs of the coefficients $\mathcal{C}_1$ and $\mathcal{C}_3$ (see \eqref{eq:nonneg}). Next we prove existence of solutions
to \eqref{eq:discpb1}-\eqref{eq:discpb3} by also utilizing the fact that these equations are linear with respect to their unknowns. It is thus equivalent to show uniqueness.  Fix $n\geq 1$ and assume that $\tilde{P}_w$ is the difference of two solutions to \eqref{eq:discpb1}. We have
\[
(\mathcal{C}_1(P_o^n,P_w^n) \frac{\tilde{P}_w}{\tau}, q_h)_\Omega
+ a(\lambda_w^n K; \tilde{P}_w,q_h) = 0, \quad \forall q_h\in Q_h.
\]
Choosing $q_h = \tilde{P}_w$ in the equation above and using \eqref{eq:aPcoer} and \eqref{eq:nonneg}, we have that $\tilde{P}_w=0$.
Next, we denote by $\tilde{P}_o$ the difference of two solutions to \eqref{eq:discpb2}; it satisfies
\[
(\mathcal{C}_3(P_o^n,P_w^n) \frac{\tilde{P}_o}{\tau},q_h )_\Omega
+ a(\lambda_o^n K; \tilde{P}_o,q_h)   = 0, \quad \forall q_h \in Q_h.
\]
Again, by choosing $q_h = \tilde{P}_o$ and using \eqref{eq:aPcoer} and \eqref{eq:nonneg}, we have that $\tilde{P}_o=0$.
Finally, let $\tilde{\bfU}$ be the difference of two solutions to \eqref{eq:discpb3}. It satisfies
\[
c(\tilde{\bfU},\bfv_h) 
+\gamma \left(\frac{\tilde{\bfU}}{\tau },\bfv\right)_\Omega
=0, \quad \forall \bfv_h\in\bfV_h. 
\]
Choosing $\bfv_h = \tilde{\bfU}$ and using \eqref{eq:cUcoer}, yields
\[
\frac12 \Vert \tilde{\bfU}\Vert_{\DG}^2 + \gamma \Vert \tilde{\bfU}\Vert_{L^2(\Omega)}^2 = 0,
\]
which gives the desired result.
\end{proof}

\section{Numerical Results}
\label{sec:numer}

We first verify the optimal rate of convergence of our proposed numerical method for smooth solutions and then  we apply our scheme
to various porous media problems: the McWorther problem, a non-homogeneous medium with different capillary pressures, 
a medium subjected to load, and a medium with highly varying permeability and porosity.
Unless explicitely stated in the text, all examples use the following physical parameters.
\begin{align*}
\mu_w=\mu_o = 0.001 \, \mbox{Pa s}, \, K_w=K_{o}= 10^{10} \, \mbox{Pa}, \\ \lambda = 7142857 \,  \mbox{Pa}, \,  \mu=1785714  \, \mbox{Pa},  \, K_s=8333333 \, \mbox{Pa}, \\
\phi=0.3,  \,\alpha=0.8,  \, \epsilon_p = \epsilon_\bfu = -1. 
\end{align*}

The linear systems are solved by LU preconditioned GMRES with absolute stopping criteria $10^{-12}$.  Most of the problems converged with desired accuracy in 1 or 2 iterations.


\subsection{Convergence Rates}

We employ the method of manufactured solutions to test the convergence rates of our scheme.  The exact solution is smooth and defined
by
\[
p_w(x,y,z) = \sin(y)+5, \quad p_o(x,y,z)=\cos(x)+25, \quad \bfu(x,y,z) = (\cos(x),\sin(y),\cos(z+x))^T.
\]
The following physical parameters are chosen: $\phi = 0.3, K=1, \lambda=1,\mu=0.6, K_w=K_o=K_s=10, \alpha=0.9, \lambda_w(s_w) = s_w, \lambda_o(s_w)=1-s_w$ and $p_d =10$.
The computational parameters are $\tau = 1, \tau_0=10^{-2}, \sigma_p = 20, \sigma_\bfu=14$ and $\gamma=10$.
The domain is the unit cube partitioned into tetrahedra. No cut-off operator is applied in this example. 
We compute the numerical errors at the final time $T=5$ on a series of uniformly refined meshes.
\[
e_{w} = p_w(T)-P_w^N, \quad e_o = p_o(T)-P_o^N, \quad e_\bfu = \bfu(T)-\bfU^N.
\]
Table~\ref{tab:errorpressure} displays the errors for the phase pressures in the
broken gradient norm and the $L^2$ norm, and the errors for the displacement in the $L^2$ norm. The rates are optimal.

\begin{table}[H]
\centering
\vspace{-0.5em}
\begin{tabular}{|c|c|c|c|c|c|c|c|c|c|c|}
\hline
$h$  & $\Vert e_w\Vert$ & Rate & $\Vert \nabla_h e_w \Vert $& Rate
& $\Vert e_o\Vert$ & Rate & $\Vert \nabla_h e_o \Vert $& Rate & $\Vert e_\bfu \Vert$ & Rate\\
\hline
1/2    & 5.78e-03   &  & 6.89e-02  &  &  7.53e-03   &  & 1.08e-01 &    & 1.16e-02 & \\
1/4   &  1.56e-03   & 1.89 &   3.57e-02  & 0.95 & 2.01e-03  & 1.91 &   5.48e-02   & 0.98 &  3.03e-03  & 1.93\\
1/8   & 4.03e-04   & 1.95 & 1.80e-02   & 0.99 &  5.24e-04  & 1.94 & 2.75e-02  & 0.99 &  7.79e-03    & 1.94\\
\hline
\end{tabular}
\caption{Numerical errors and rates for the numerical approximations of smooth solutions.}
\label{tab:errorpressure}
\end{table}

\subsection{McWhorter Problem}

The original McWhorter problem simulates counter-current flow in a homogeneous one-dimensional domain. 
Because of the quasi-analytical solution developed in 
\cite{McwhortherSunada1990}, this benchmark problem is ideal for evaluating the accuracy of a numerical scheme.
The fluid phases are incompressible, which means that the inverse of the bulk modulus for each phase is set to zero.  
The entry pressure (see \eqref{eq:pc}) is $p_d=5000$ Pa. 
For this problem, the Biot-Willis constant is set equal to $1$ and the permeability is $K=10^{-10}$. 
We solve this problem in a thin slab $[0,2.6]\times [0,0.065]\times [0,0.0325]$ partitioned into 160 cubes of side $h=0.0325$, each
cube is then divided into 6 tetrahedra. 
The computational parameters are: 
\[
\tau=1	\mbox{ s}, \quad \tau_0 = 0.01 \mbox{ s}, \quad \sigma_p = 400, \quad \sigma_\bfu = 1000, \quad \gamma = 10^5, \quad T=5000 \mbox{ s}.
\]
Initially, the pressures are $p_w^0 = 184000$ Pa and $p_o^0 = 234000$ Pa,  which implies the initial saturation in the domain is $s_w^0 = 0.01$. \Bk The Dirichlet boundary is the left vertical boundary $\{0\}\times [0,0.065]\times [0,0.0325]$. Dirichlet data are selected such that the wetting phase saturation is equal to $0.99$ on that boundary.  This means that
$p_{w\D} = 194970$  Pa and $p_{o\D} = 200000$ Pa.
No flow is imposed on the remainder of the boundary: $g_w = g_o = 0$.
Zero displacement is prescribed on both left and right vertical boundaries and no traction ($ \bfg_\bfu = {\bf 0}$) is prescribed on the remainder of the boundary.
\[
\bfu_\D = {\bf 0} \quad \mbox{on} \quad \{0\}\times [0,0.065]\times [0,0.0325] \cup \{2.6\} \times [0,0.065]\times [0,0.0325].
\]
The saturation profiles at different times are plotted in Fig.~\ref{fig:mcwhorter_sat_profile_smin001h00325}. We observe that the numerical solution coincide with the analytical solution. 
\begin{figure}[H]
\centering
\includegraphics[width=0.65\linewidth]{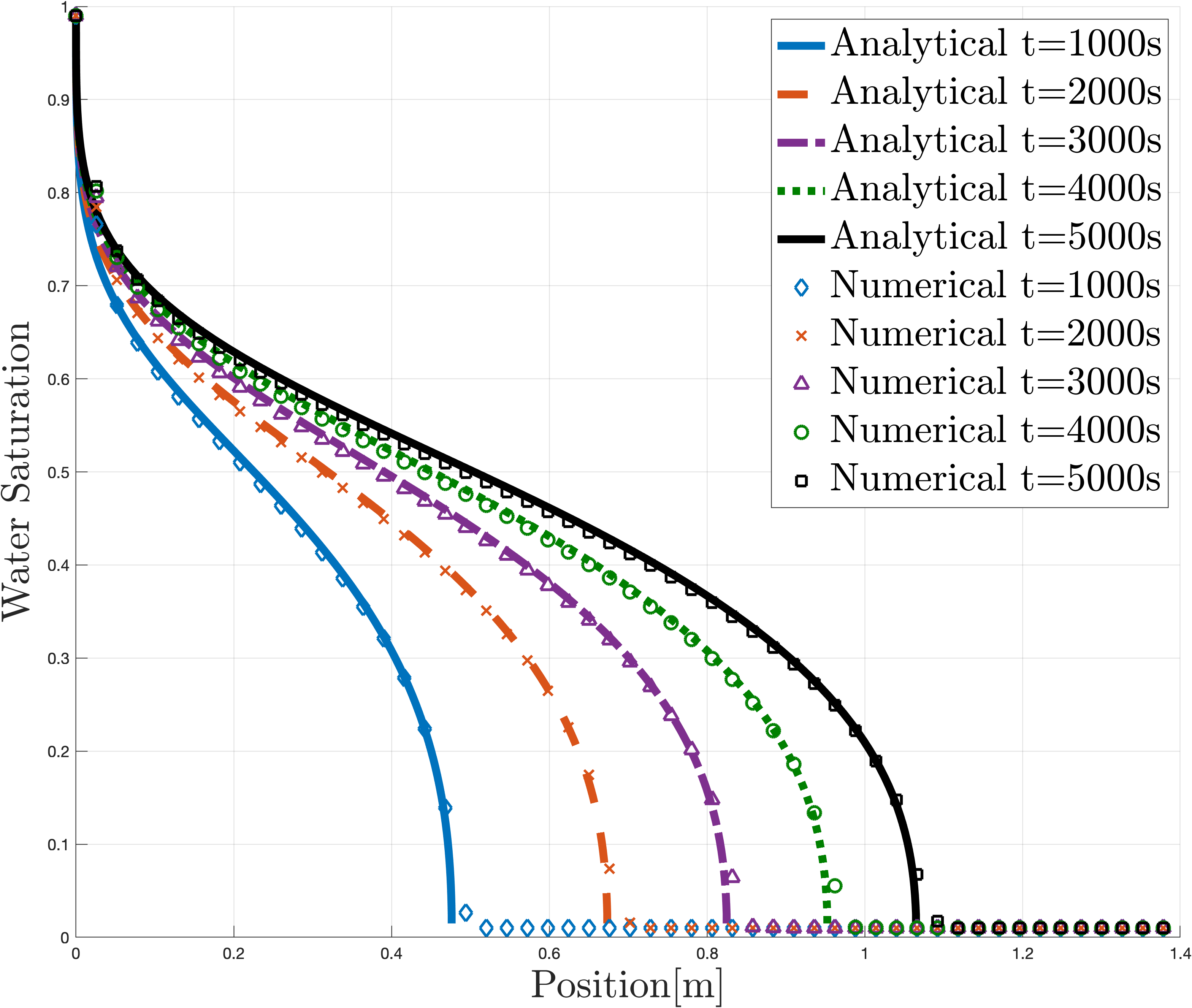}
\caption{McWhorter problem: wetting phase saturation profiles at five selected time steps.}
\label{fig:mcwhorter_sat_profile_smin001h00325}
\end{figure}
%
In Fig.~\ref{fig:mcwhorter_displacement}, we compare the numerical displacement obtained with our method with the numerical displacement
obtained by a finite volume discretization in \cite{asadi2015comparison} at $t=1000$s.  Because there are no external forces, changes in the displacement are caused by changes in the pressures. We observe a good agreement between the two solutions.
\begin{figure}[H]
\centering
\includegraphics[width=0.45\linewidth]{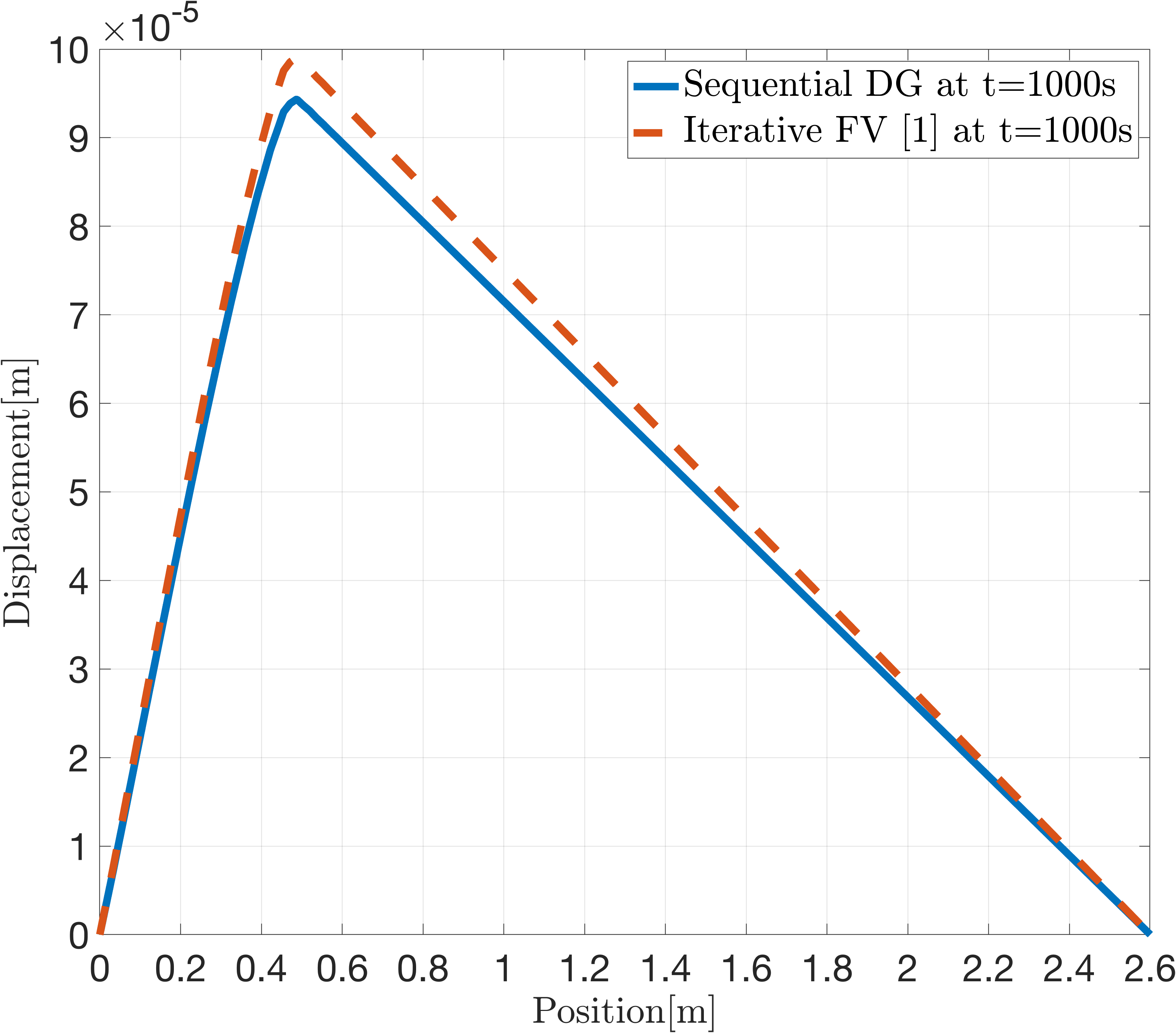}
\caption{McWhorter problem: displacement at $t=1000$s.}
\label{fig:mcwhorter_displacement}
\end{figure}

\subsection{Porous Medium with Heterogeneous Inclusions}
This example considers a porous medium with two rock types with different permeability and
entry pressure in each rock.  The domain $\Omega = [0,100]\times [0,100]\times [0,2.5]$ (m$^3$) contains
two box-shape inclusions $[20,40]\times[50,70]\times[0,2.5]$ (m$^3$) and $[50,90]\times[20,50]\times[0,2.5]$ (m$^3$) (see Fig.~\ref{fig:2blocksetup}).  The permeability and entry pressure for rock type 1 (resp. type 2) are denoted by $K_1$ and $p_{d1}$ (resp. $K_2$ and $p_{d2}$). 
We consider two cases:
\begin{align*}
\mbox{Case 1:} & \quad K_1 = 4.2\times 10^{-11}, \, p_{d1} =\sqrt{2}p_{d2},  \, K_2 = 2 K_1, \, p_{d2} = 5000,\\
\mbox{Case 2:} & \quad K_1 = 8.4\times 10^{-11}, \, p_{d1} = 5000, \, K_2 = K_1/2, \, p_{d2} =\sqrt{2}p_{d1}.
\end{align*}

\vspace{-1em}

\begin{figure}[H]
\subfigure[top view \label{fig:2blocksetuptop}]{
\includegraphics[width=0.32\linewidth]{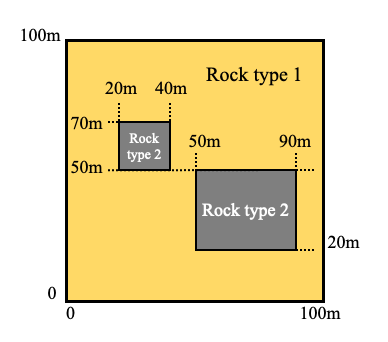}}
\subfigure[flow BCs \label{fig:2blockBCflow}]{
\includegraphics[width=0.32\linewidth]{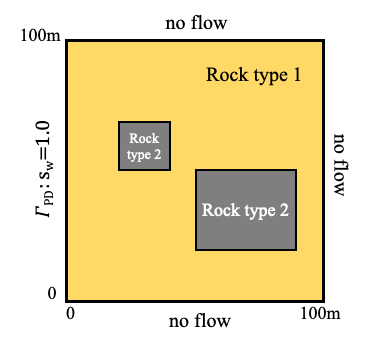}}
\subfigure[geomechanics BCs \label{fig:2blockBCdisp}]{
\includegraphics[width=0.32\linewidth]{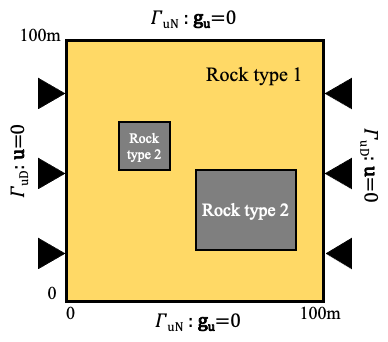}}
\caption{Domain with two inclusions: top view and set-up of boundary conditions for flow and geomechanics. }
\label{fig:2blocksetup}
\end{figure}

The initial non-wetting phase pressure is $p_o^0 = 200000$ Pa and the initial wetting phase pressure is chosen so that the initial wetting phase saturation in the areas of rock type 1 and rock type 2 are 0.1 and 0.05 respectively. Dirichlet data are selected such that the wetting phase saturation is equal to $1.0$ on the left side $\{0\}\times [0,100]\times [0,2.5]$, 
  this means that $p_{w\D} = 195000$  Pa and $p_{o\D} = 200000$ Pa on that side.
No flow is imposed on the remainder of the boundary: $g_w = g_o = 0$.
Zero displacement is prescribed on both left and right sides and no traction ($ \bfg_\bfu = {\bf 0}$) is prescribed on the remainder of the boundary.
The domain is partitioned into $9600$ tetrahedra. The computational parameters are:
\begin{equation}
\tau=5 \mbox{ days}, \quad \tau_0 = 0.05\mbox{ days}, \quad \sigma_p = 800, \quad \sigma_\bfu = 800, \quad \gamma = 10^5,\quad T=1000 \mbox{ days}.
\label{eq:comppar}
\end{equation}
First, we simulate flow for Case 1. 
Fig.~\ref{fig:blocks_case1_sat} shows the wetting phase saturation contours at 50, 125, 250, 375, 500 and 1000 days.  
The saturation front avoids the inclusions that have lower permeability, as expected. As the wetting phase floods the medium, deformations occur; for better visualization the displacement components are scaled by $1200$.

\begin{figure}[H]
\vspace{-0.5em}
\centering     
\subfigure[$t=50$]{\includegraphics[width=0.29\linewidth]{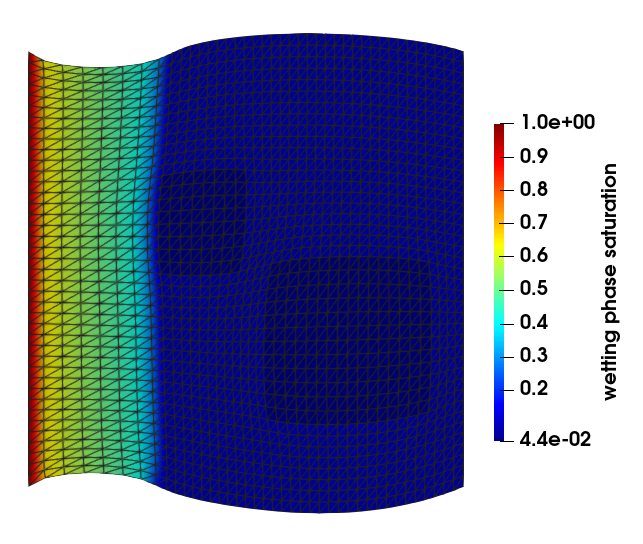}}
\subfigure[$t=125$]{\includegraphics[width=0.29\linewidth]{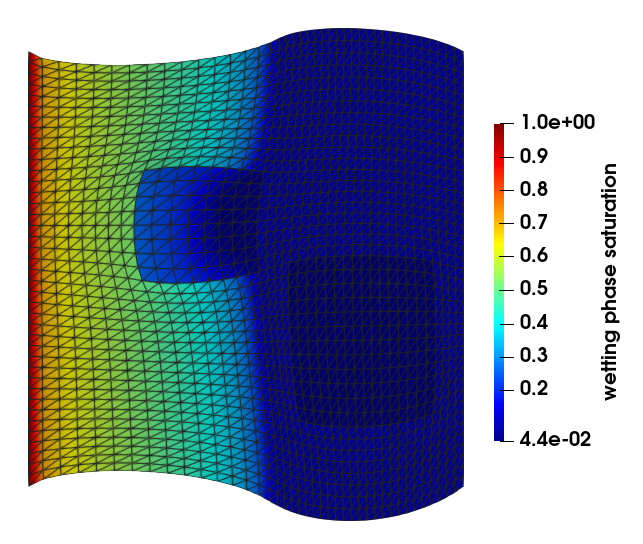}}
\subfigure[$t=250$]{\includegraphics[width=0.29\linewidth]{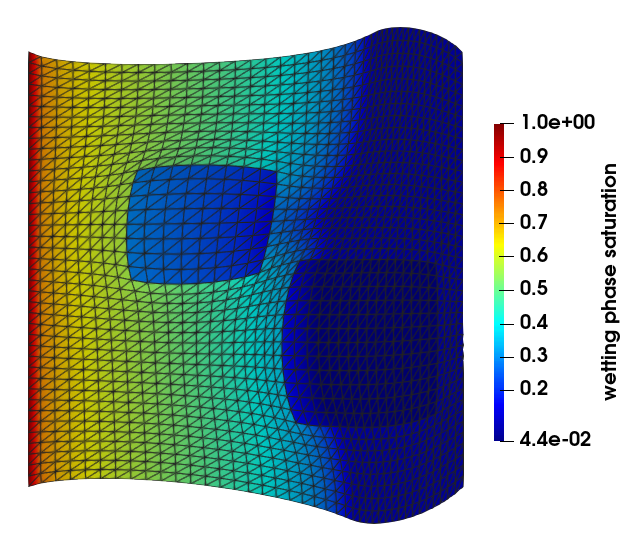}}\\
\vspace{-0.5em}
\subfigure[$t=375$]{\includegraphics[width=0.29\linewidth]{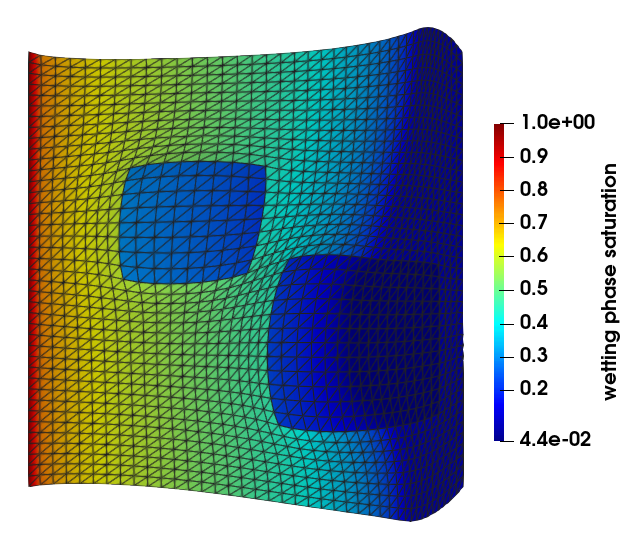}}
\subfigure[$t=500$]{\includegraphics[width=0.29\linewidth]{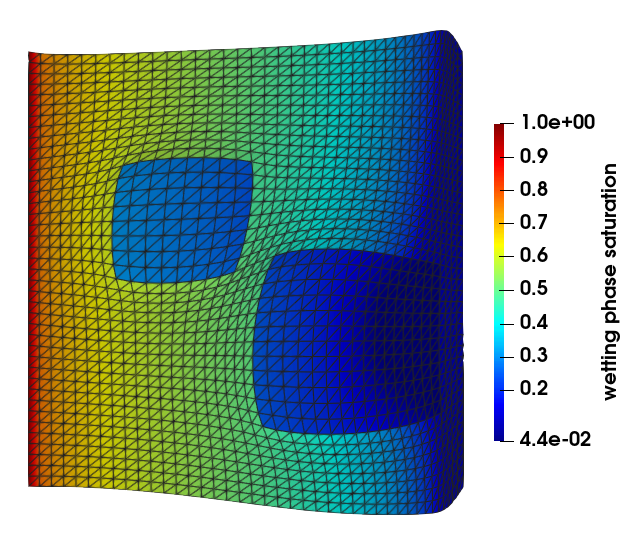}}
\subfigure[$t=1000$]{\includegraphics[width=0.29\linewidth]{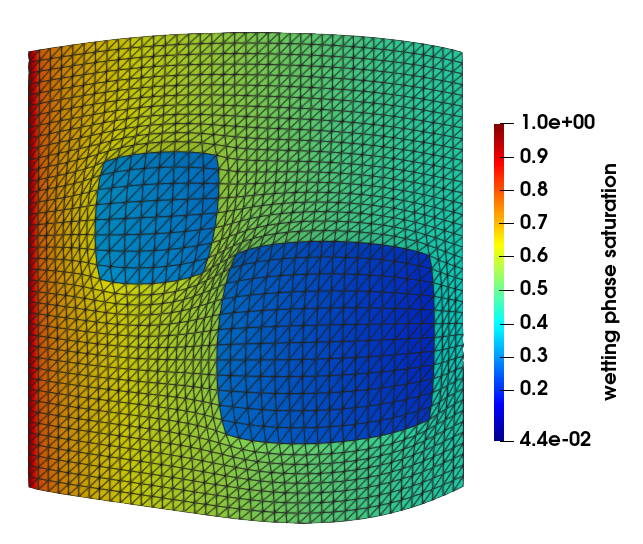}}
\caption{Heterogeneous inclusions problem for Case 1:  wetting phase saturation contours at  $t=50, 125,  250,  375,  500$  and $1000$ days.}
\label{fig:blocks_case1_sat}
\end{figure}
Profiles of the saturation front are plotted along two horizontal lines $y=35$m and $y=60$m in the plane $z=2.5$m for different times in Fig.~\ref{fig:block_case1_sat_linechart}. 
We observe that the saturation is discontinuous at the interface between the two types of rocks. The discontinuity is due to the capillary pressure function that switches to another curve as shown in Figure \ref{fig:blocks_case1_cap_pres}.  This is attributed to the fact that the entry pressures are discontinuous, the entry pressure in rock of type $2$ is smaller than the entry pressure in rock of type $1$.  We note that the threshold saturation $S_w^\ast \approx 0.84$, which is defined as $p_{c1}(S_w^\ast)=p_{c2}(1)$, is larger than the saturation  in rock 2, $S_{w2}$, and less than the saturation  in rock 1, $S_{w1}$,  therefore the phase pressure is continuous across the interface.    Figure \ref{fig:blocks_case1_pres} shows the wetting phase pressure solutions at different times. The inclusions impact the pressure contours: even though the permeability in rock 2 is twice the permeability in rock 1, the wetting phase saturation is smaller in rock 2, which yields a smaller wetting phase relative permeability. 


\begin{figure}[H]
\centering
\subfigure{\includegraphics[width=0.45\linewidth]{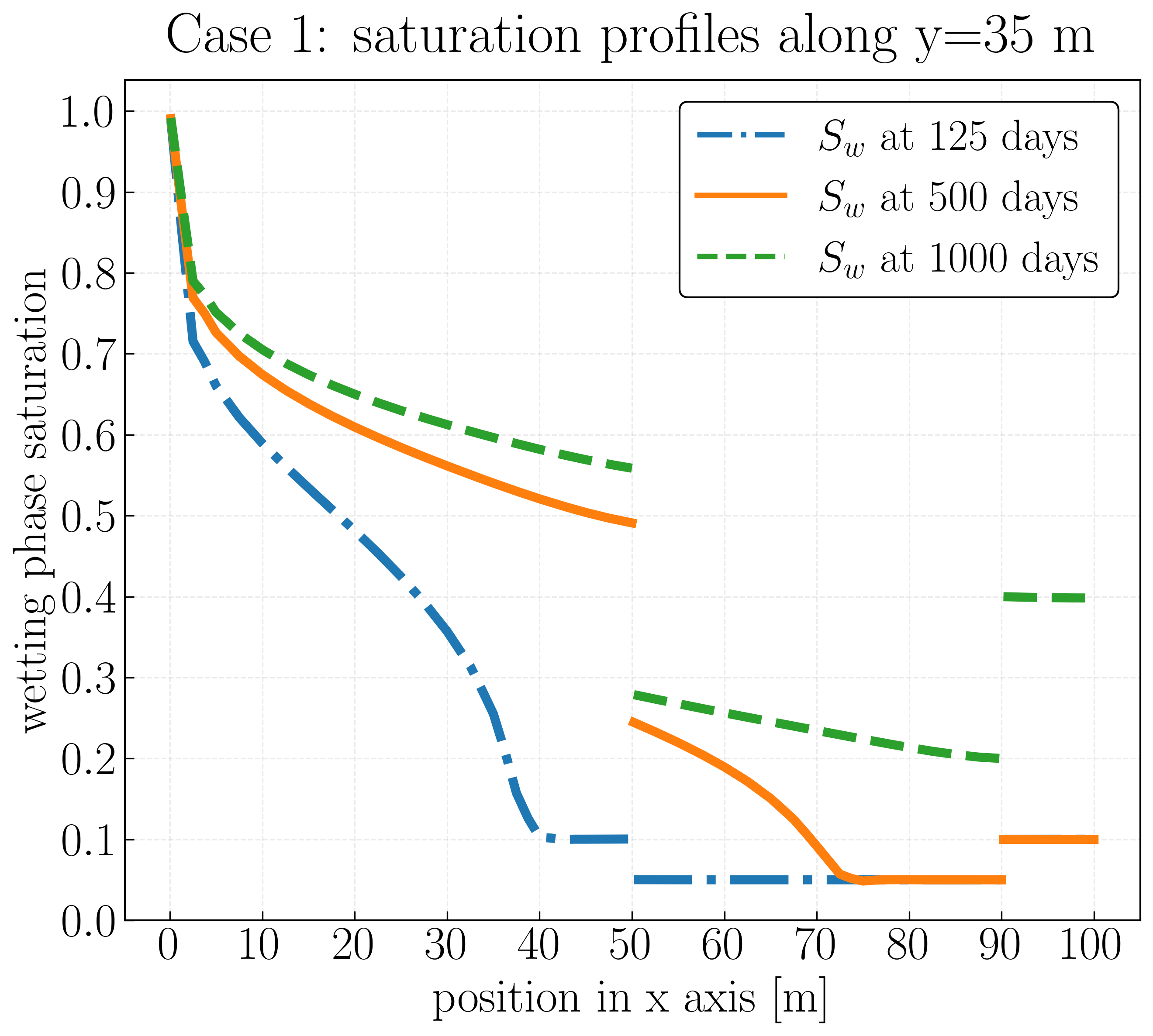}}
\subfigure{\includegraphics[width=0.45\linewidth]{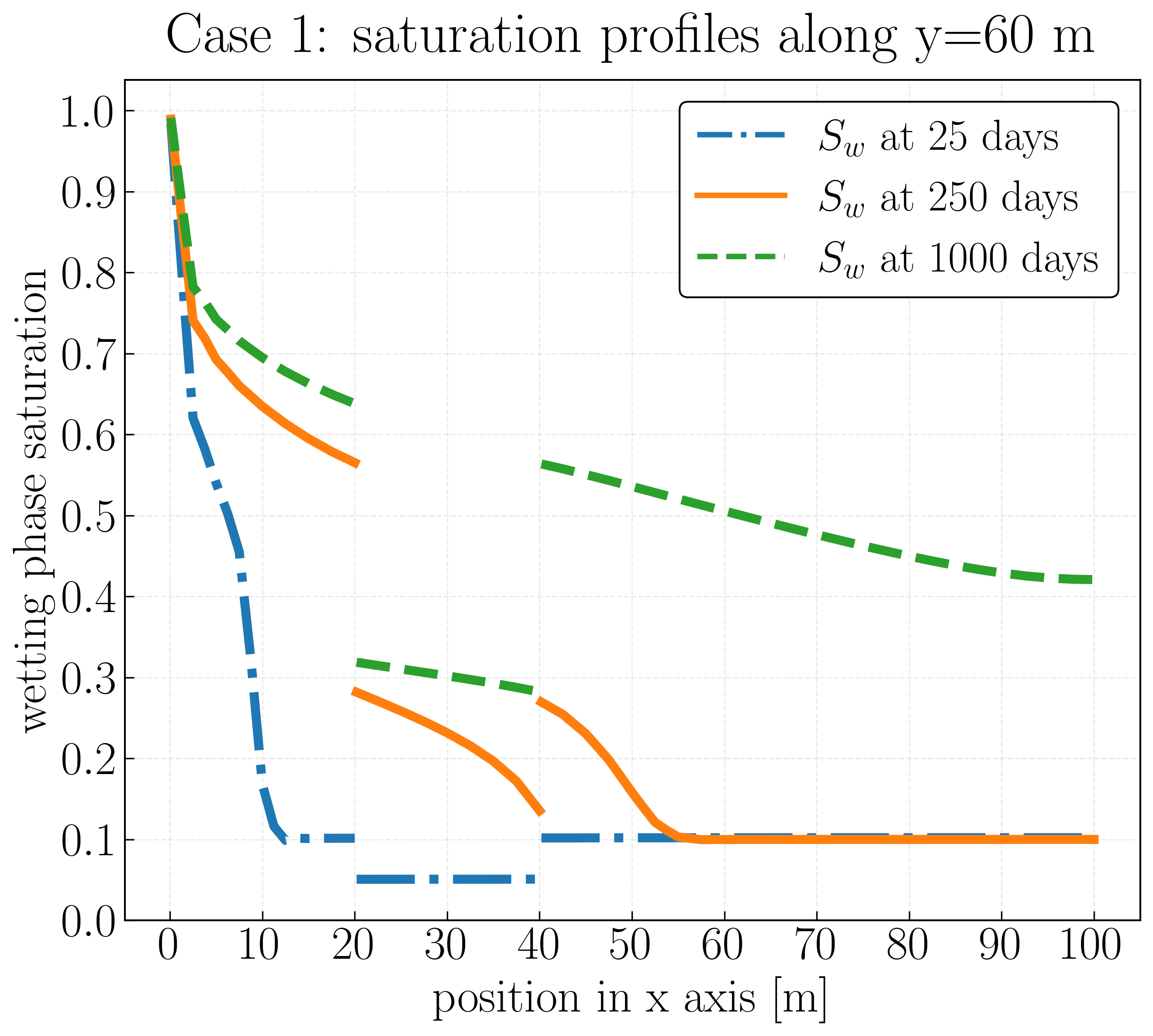}}
\caption{Heterogeneous inclusions problem for Case 1:  wetting phase saturation profiles along $y=35$ m (left) and $y=60$m (right) at selected times.}
\label{fig:block_case1_sat_linechart}
\end{figure}

\begin{figure}[H]
\centering
\includegraphics[width=0.5\linewidth]{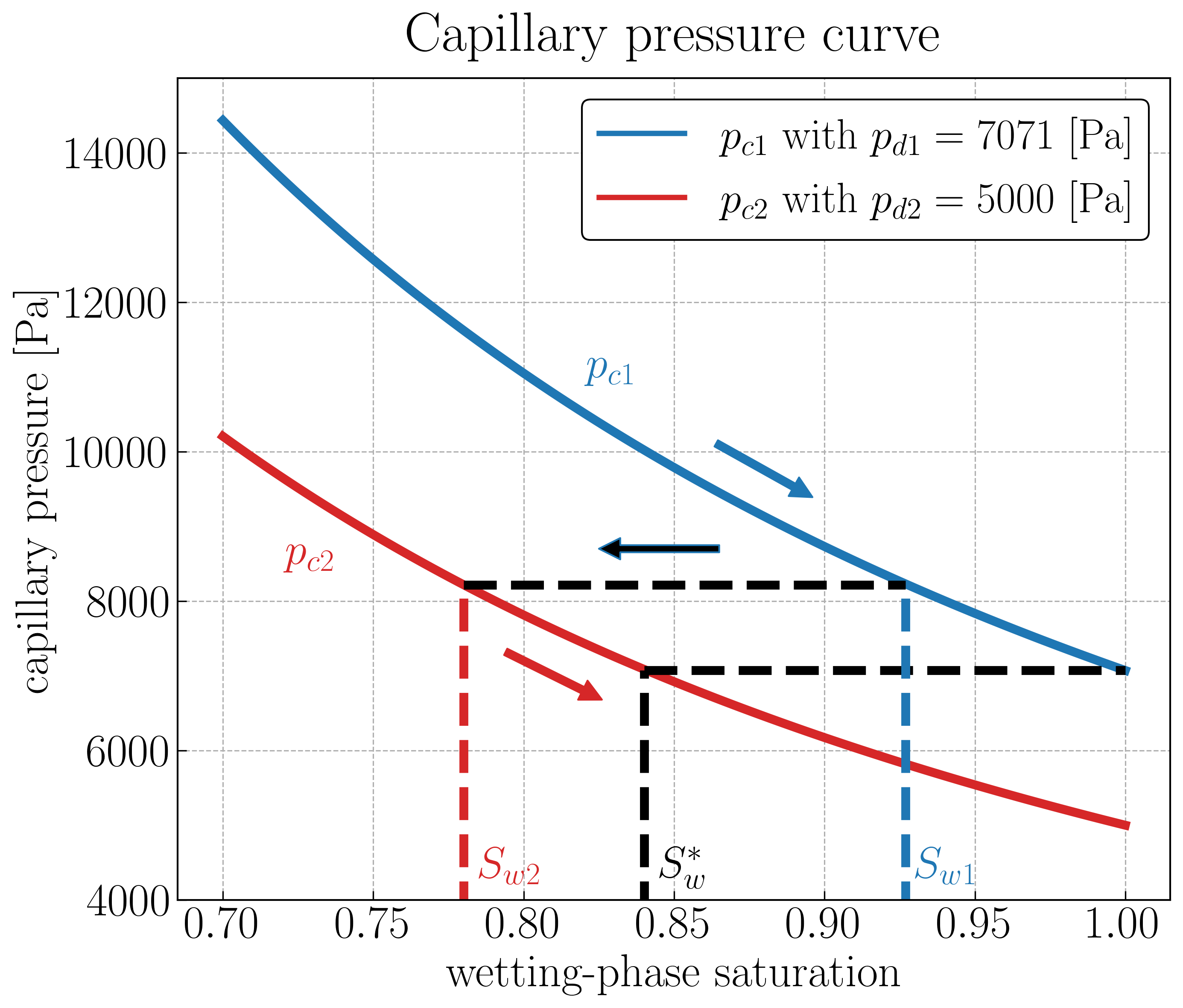}
\caption{Heterogeneous inclusions problem for Case 1: capillary pressure functions for the two rocks. } 
\label{fig:blocks_case1_cap_pres} 
\end{figure}

Since rock type 2 has a lower entry pressure,  less non-wetting phase is displaced by the wetting phase and the wetting phase saturation value lags behind in the region of rock type 2.  Overall,  the magnitude of displacement in the area of rock type 2 is smaller than in surrounding areas.  Figure \ref{fig:blocks_case1_displacement} shows the magnitude of the displacement at different times. 

Before the wetting phase front reaches the right boundary,  we first observe a significant displacement in the x-axis direction compared to the y- and z-axis directions.  More wetting phase passes through the area of rock type 1 where the medium is being stretched in the x-axis positive direction along with the flow.  Meanwhile,  the displacements in both the y-axis and z-axis increase in the direction that is perpendicular to the flow's direction.  This can be identified when the medium contracts in the y-axis when the wetting phase entered the domain. The same phenomenon can be observed when the region between two blocks is being stretched. The area that is close to the right boundary is being squeezed in the x-axis which leads to the increase of displacement in y- and z-direction until it bounces back due to the zero displacement boundary condition on the right side.  

\begin{figure}[H]
\centering     
\subfigure[$t=50$]{\includegraphics[width=0.29\linewidth]{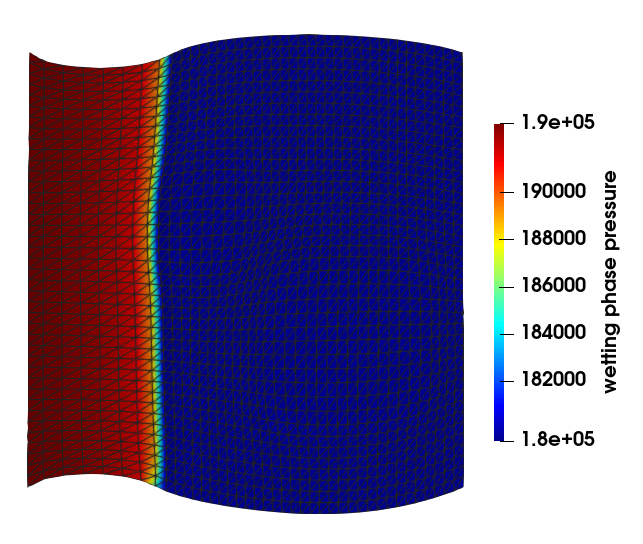}}
\subfigure[$t=125$]{\includegraphics[width=0.29\linewidth]{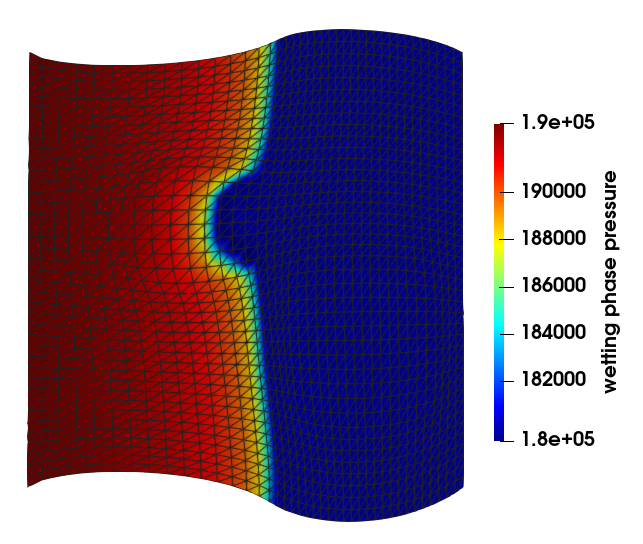}}
\subfigure[$t=250$]{\includegraphics[width=0.29\linewidth]{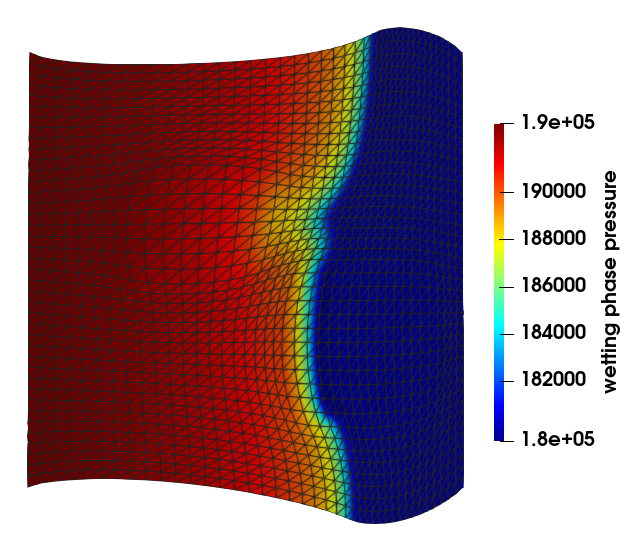}}\\
\subfigure[$t=375$]{\includegraphics[width=0.29\linewidth]{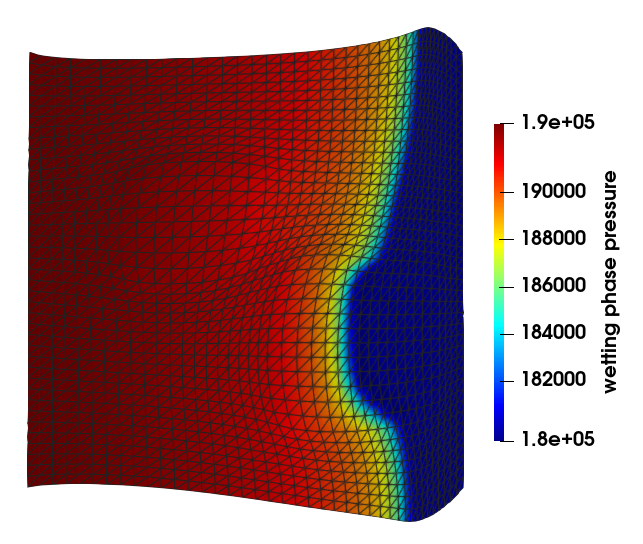}}
\subfigure[$t=500$]{\includegraphics[width=0.29\linewidth]{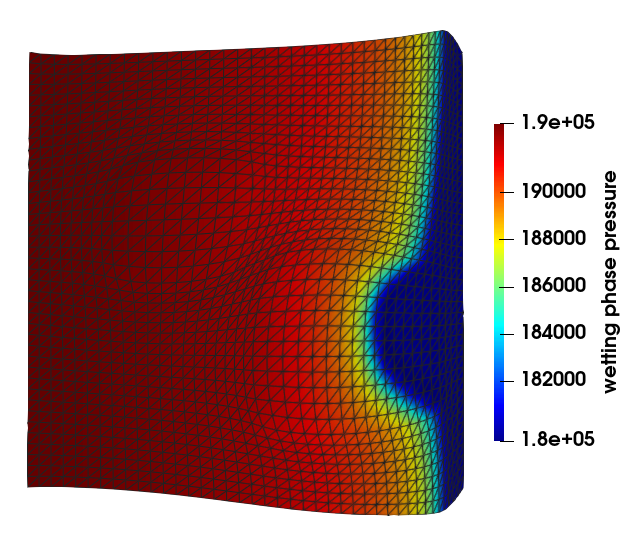}}
\subfigure[$t=1000$]{\includegraphics[width=0.29\linewidth]{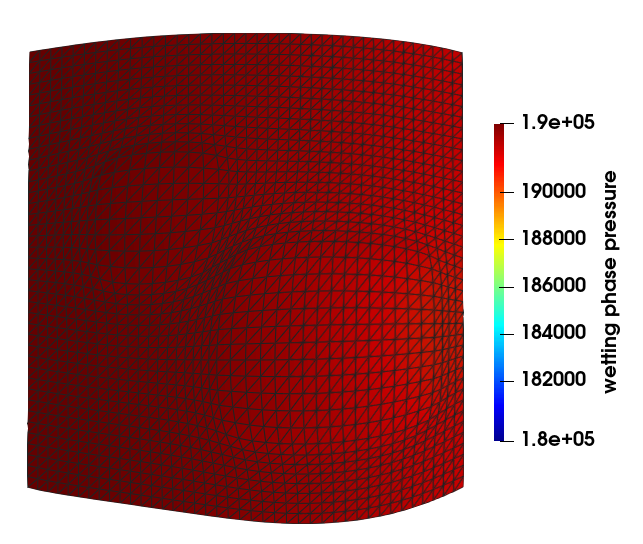}}
\caption{Heterogeneous inclusions problem for Case 1: wetting phase pressure contours at  $t=50, 125,  250,  375,  500$  and $1000$ days.}
\label{fig:blocks_case1_pres}
\end{figure}
\begin{figure}[H]
\centering     
\subfigure[$t=50$]{\includegraphics[width=0.29\linewidth]{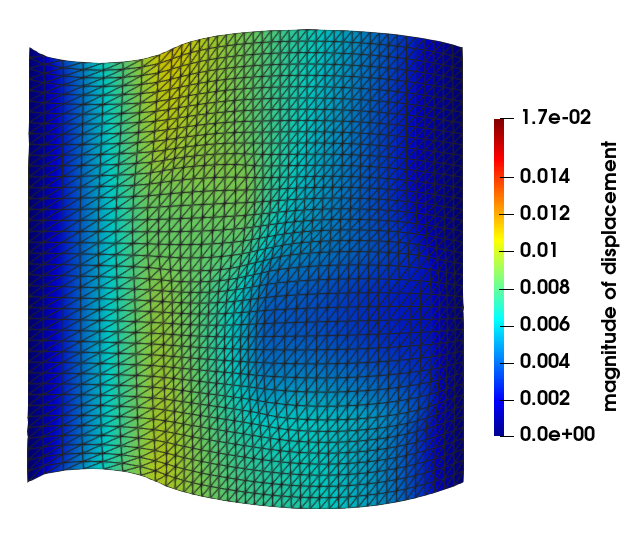}}
\subfigure[$t=125$]{\includegraphics[width=0.29\linewidth]{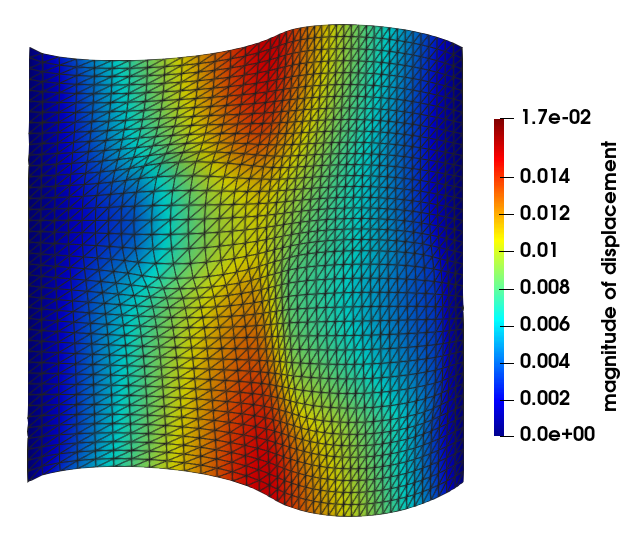}}
\subfigure[$t=250$]{\includegraphics[width=0.29\linewidth]{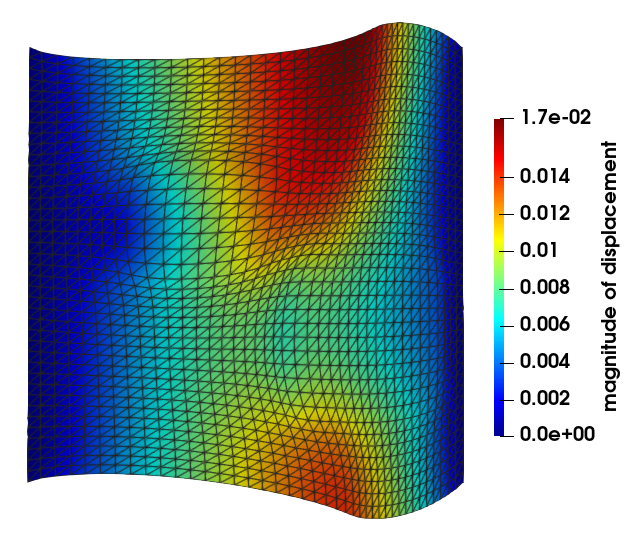}}\\
\subfigure[$t=375$]{\includegraphics[width=0.29\linewidth]{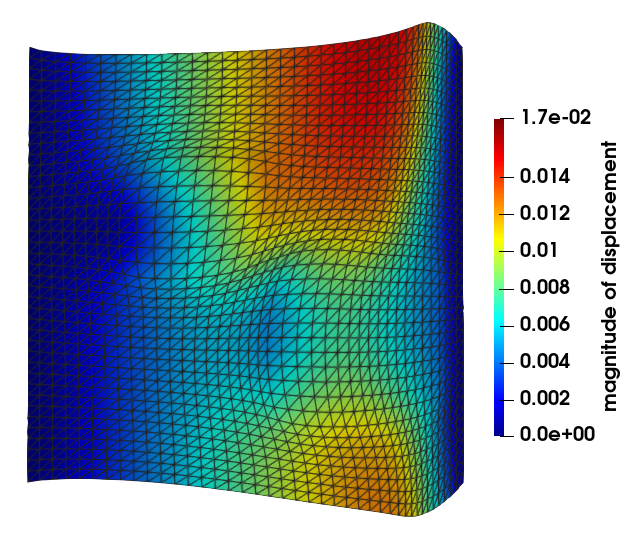}}
\subfigure[$t=500$]{\includegraphics[width=0.29\linewidth]{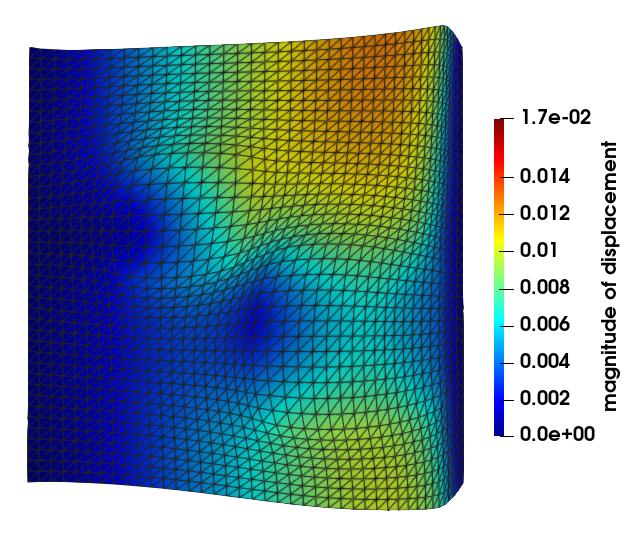}}
\subfigure[$t=1000$]{\includegraphics[width=0.29\linewidth]{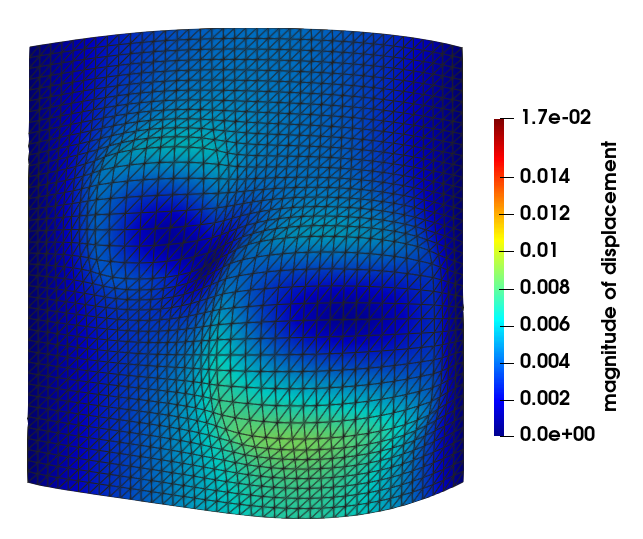}}
\caption{Heterogeneous inclusions problem for Case 1: magnitude of displacement  at $t=50, 125,  250,  375,  500$  and $1000$ days.}
\label{fig:blocks_case1_displacement}
\end{figure}

In the next experiments, we consider Case 2 where the rock properties are switched compared to Case 1.  
Initially, the wetting and non-wetting phase pressures are constant ($p_o^0=200000$ Pa) and the initial wetting phase saturation
in the areas of rock type 1 and rock type 2 are 0.1 and 0.2 respectively.
The saturation contours and profiles are shown in 
Fig.~\ref{fig:blocks_case2_sat} 
and Fig.~\ref{fig:block_case2_sat_linechart}  respectively. 
Since the saturation in the area of rock type 1, $S_{w1}$,  is less than threshold saturation $S^*_{w}$ (see Fig.~\ref{fig:blocks_case2_cap_pres}), the phase pressure is continuous across the interface.
Wetting phase pressure and magnitude of displacement are
presented in Fig.~\ref{fig:blocks_case2_pres}  and Fig.~\ref{fig:blocks_case2_displacement} respectively.

As seen in Figure \ref{fig:blocks_case2_pres},  the wetting phase pressure propagates in the area of rock type 2 faster than in the area of rock type 1 due to higher initial wetting phase saturation.  Higher wetting phase saturation indicates that there is more wetting phase that goes into the rock type 2 region (see Fig.~\ref{fig:blocks_case2_sat}).  This leads to a significant displacement of the rock type 2 in the x-axis and y-axis directions.  
Finally we remark that in the z-direction, the regions of rock 2 contract for Case 1 whereas they expand for Case 2 
(see Fig.~\ref{fig:3dzdirection}).

\begin{figure}[H]
\centering     
\subfigure[$t=50$]{\includegraphics[width=0.29\linewidth]{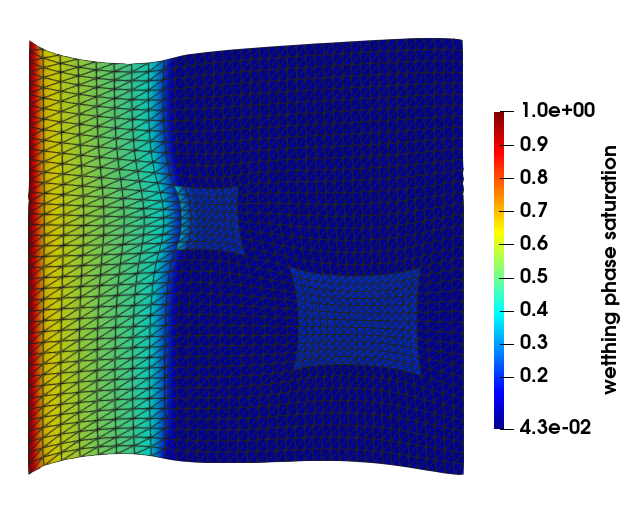}}
\subfigure[$t=125$]{\includegraphics[width=0.29\linewidth]{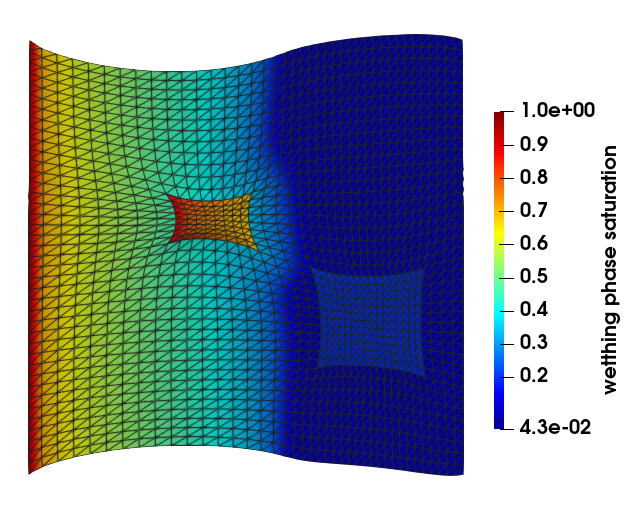}}
\subfigure[$t=250$]{\includegraphics[width=0.29\linewidth]{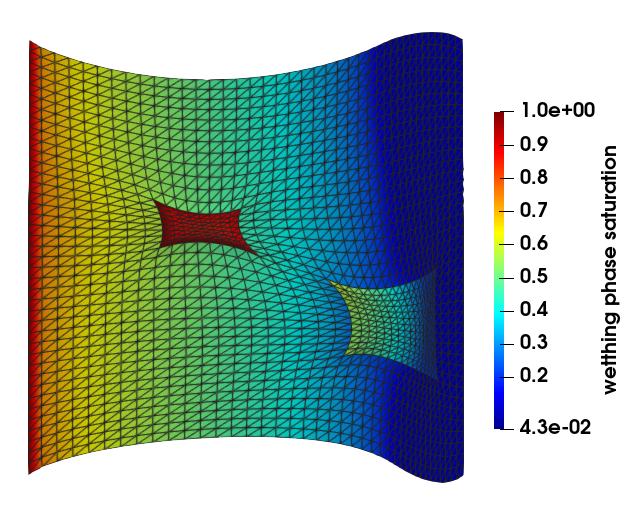}}\\
\subfigure[$t=375$]{\includegraphics[width=0.29\linewidth]{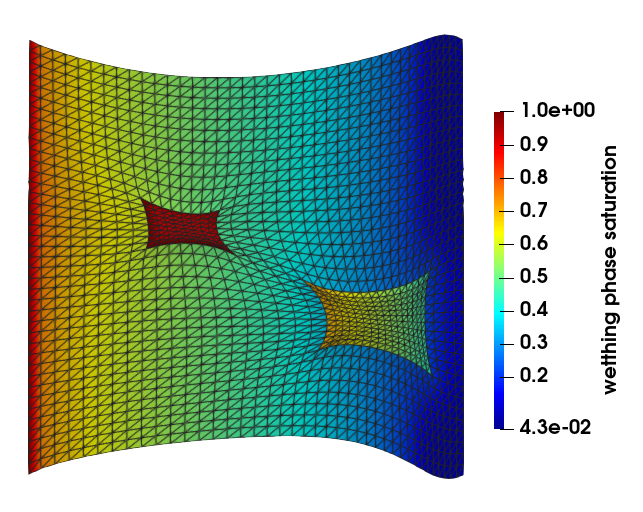}}
\subfigure[$t=500$]{\includegraphics[width=0.29\linewidth]{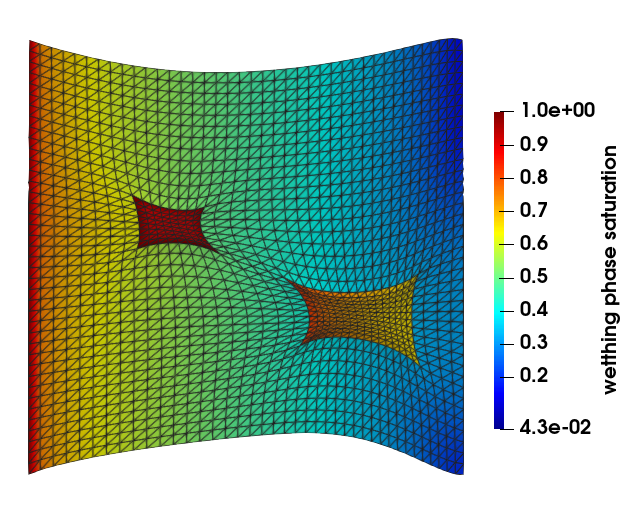}}
\subfigure[$t=1000$]{\includegraphics[width=0.29\linewidth]{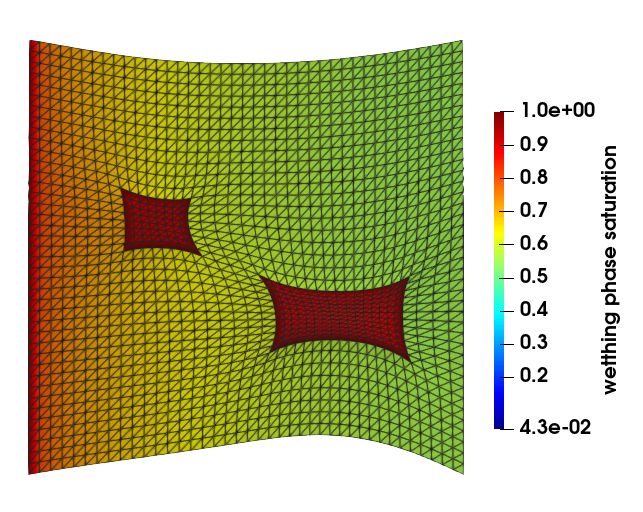}}
\caption{Heterogeneous inclusions  problem with switched rock types: wetting phase saturation contours at $t=50, 125,  250,  375,  500$  and $1000$ days.}
\label{fig:blocks_case2_sat}
\end{figure}

\begin{figure}[H]
\centering
\subfigure{\includegraphics[width=0.45\linewidth]{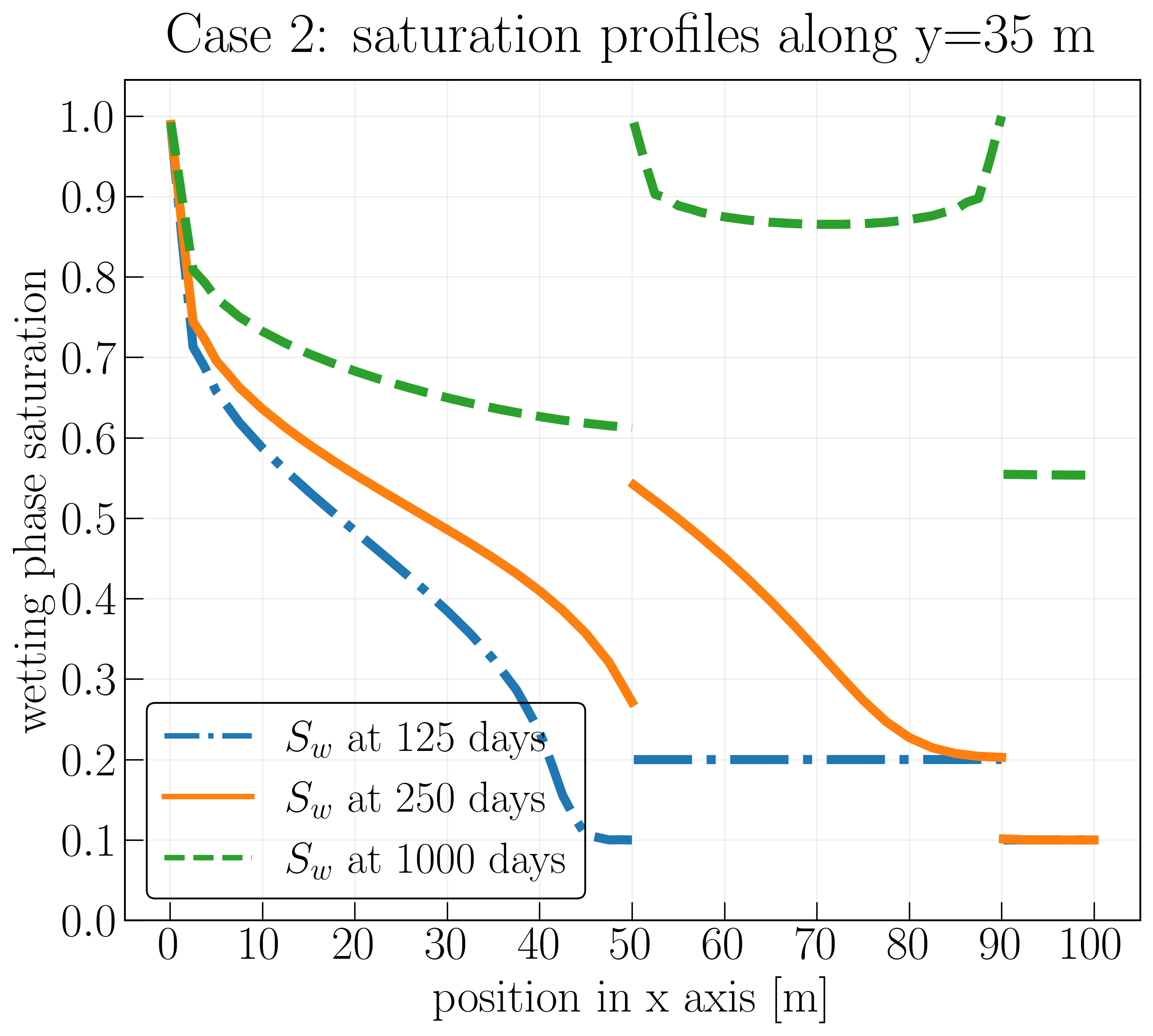}}
\subfigure{\includegraphics[width=0.45\linewidth]{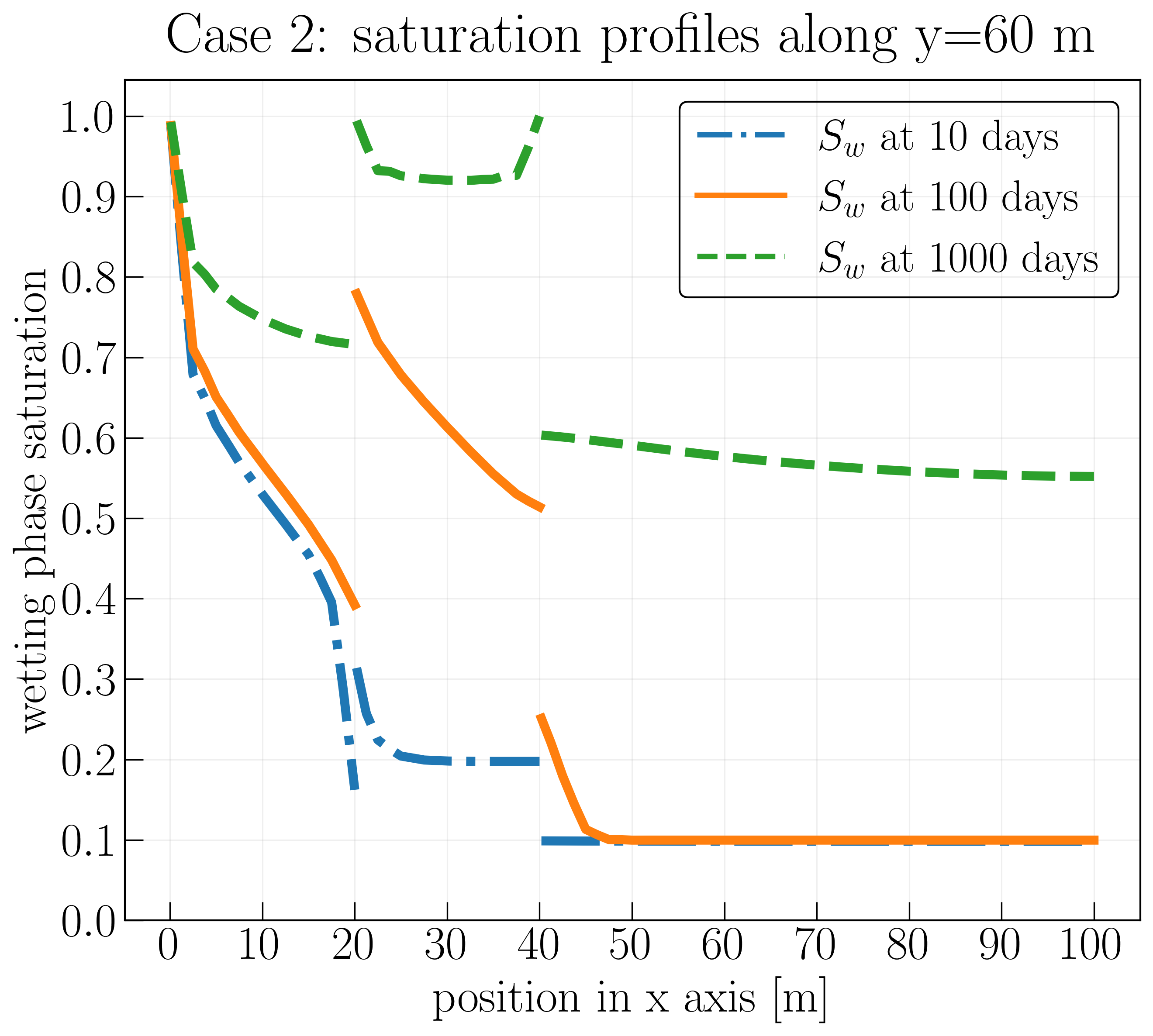}}
\caption{Heterogeneous inclusions problem for Case 2:   wetting phase saturation profiles along $y=35$ m (left) and $y=60$m (right) at selected times.}
\label{fig:block_case2_sat_linechart}
\end{figure}

\begin{figure}[H]
\centering
\includegraphics[width=0.5\linewidth]{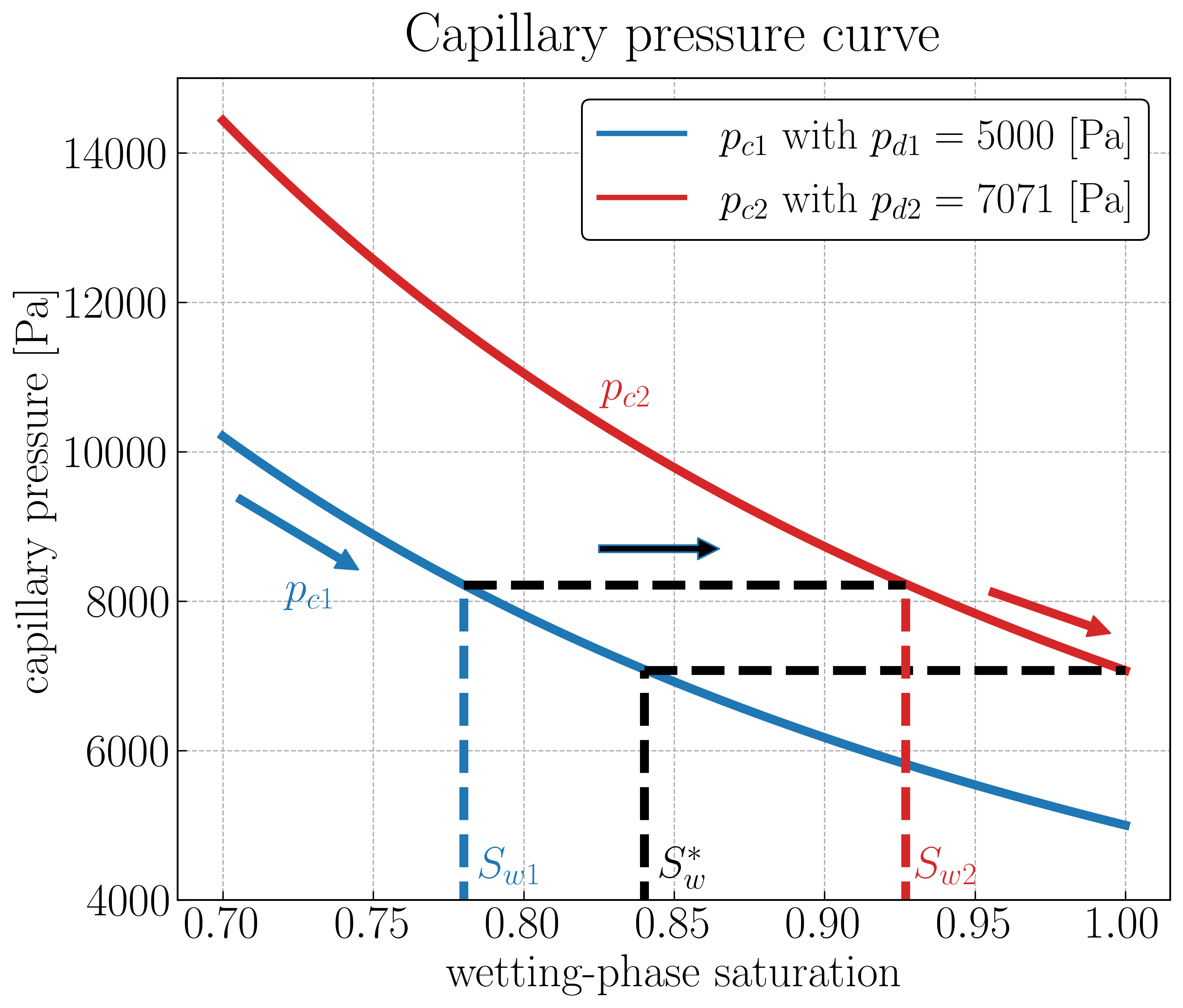}
\caption{Heterogeneous inclusions problem for Case 2: capillary pressure functions for two rocks.} 
\label{fig:blocks_case2_cap_pres} 
\end{figure}

\begin{figure}[H]
\centering     
\subfigure[$t=50$]{\includegraphics[width=0.29\linewidth]{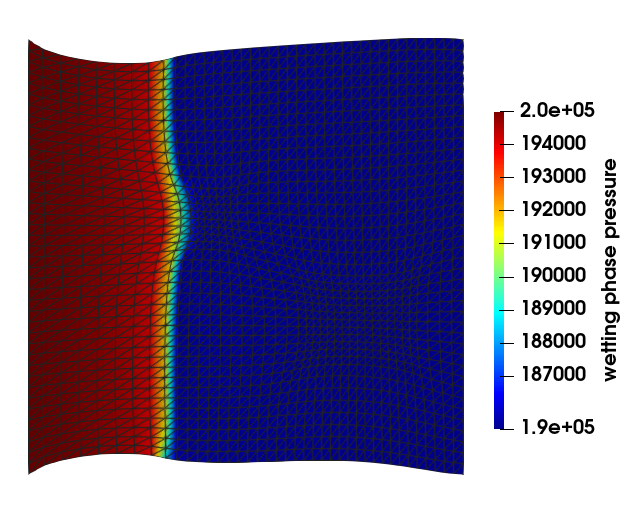}}
\subfigure[$t=125$]{\includegraphics[width=0.29\linewidth]{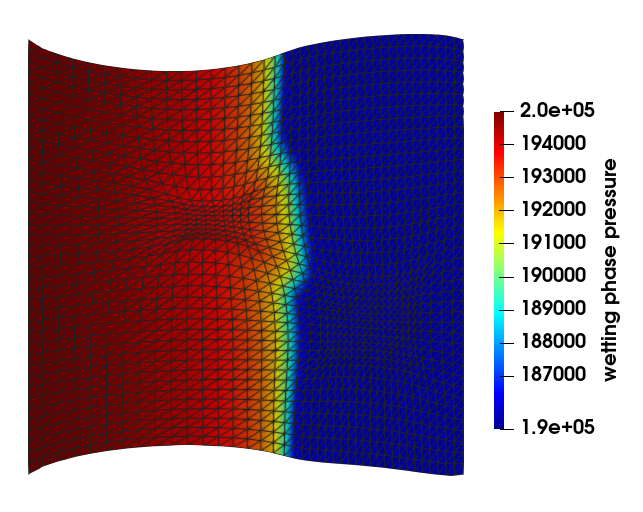}}
\subfigure[$t=250$]{\includegraphics[width=0.29\linewidth]{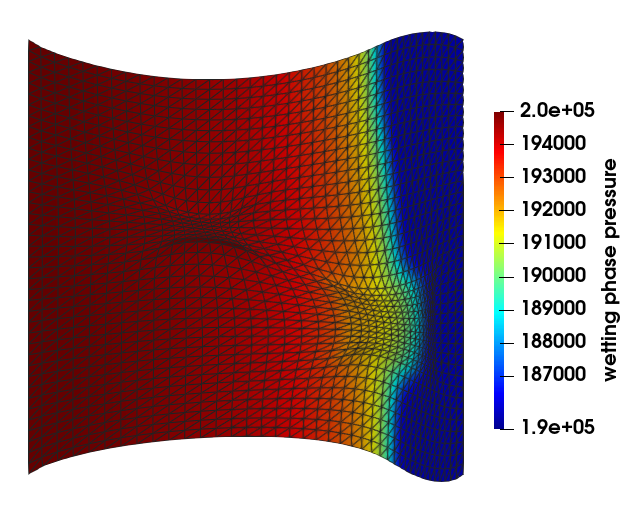}}\\
\subfigure[$t=375$]{\includegraphics[width=0.29\linewidth]{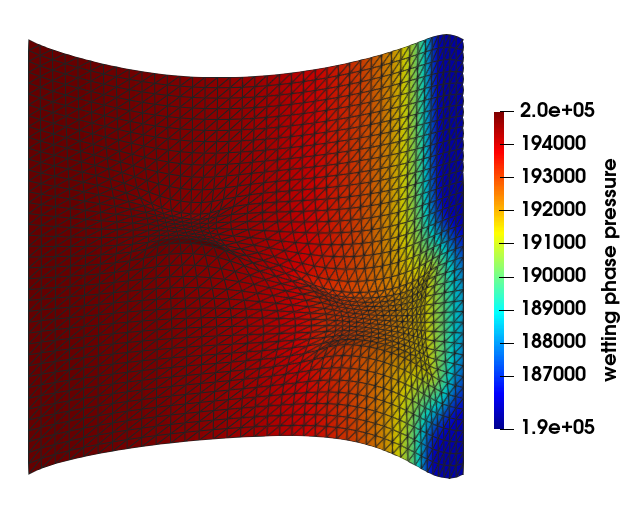}}
\subfigure[$t=500$]{\includegraphics[width=0.29\linewidth]{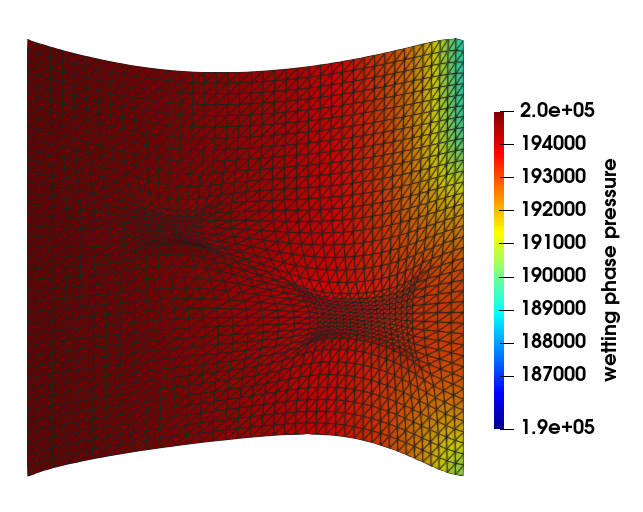}}
\subfigure[$t=1000$]{\includegraphics[width=0.29\linewidth]{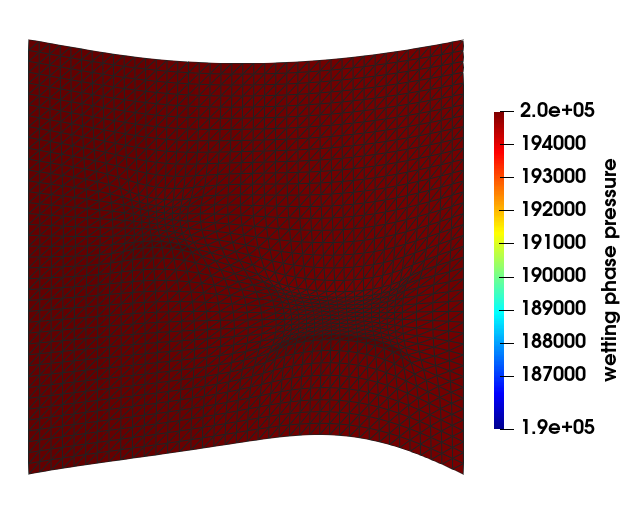}}
\caption{Heterogeneous inclusions problem for Case 2: wetting phase  pressure contours  at $t=50, 125,  250,  375,  500$  and $1000$ days.}
\label{fig:blocks_case2_pres}
\end{figure}

\begin{figure}[H]
\centering     
\subfigure[$t=50$]{\includegraphics[width=0.29\linewidth]{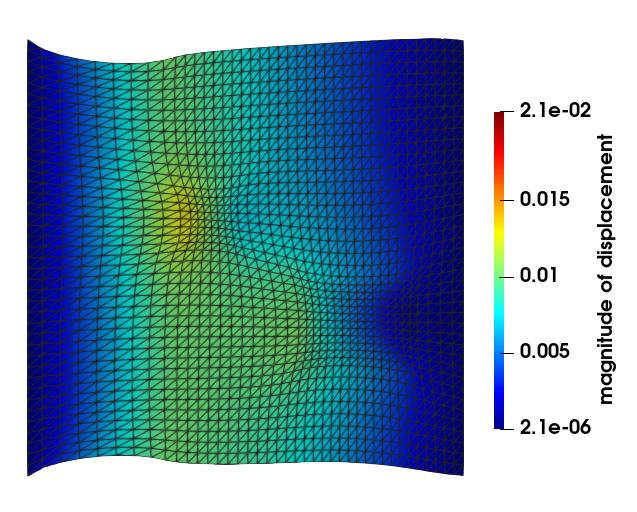}}
\subfigure[$t=125$]{\includegraphics[width=0.29\linewidth]{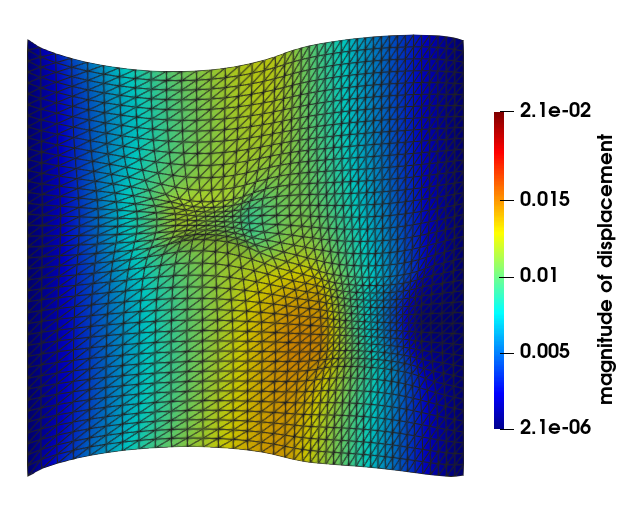}}
\subfigure[$t=250$]{\includegraphics[width=0.29\linewidth]{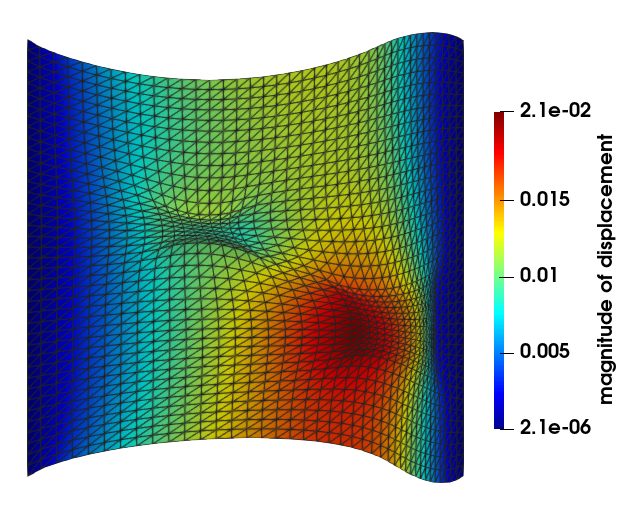}}\\
\subfigure[$t=375$]{\includegraphics[width=0.29\linewidth]{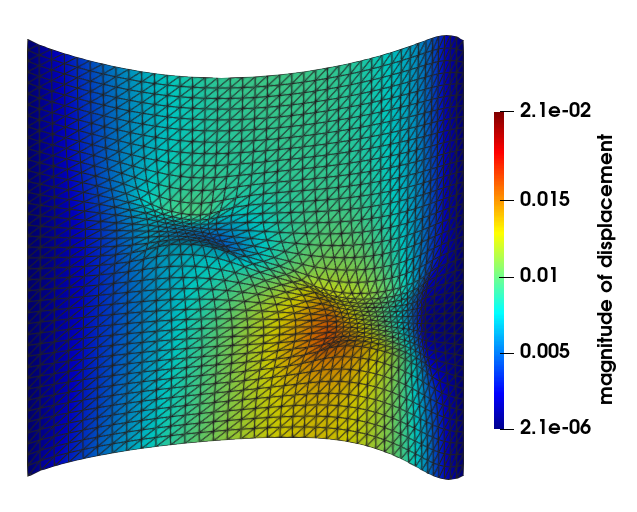}}
\subfigure[$t=500$]{\includegraphics[width=0.29\linewidth]{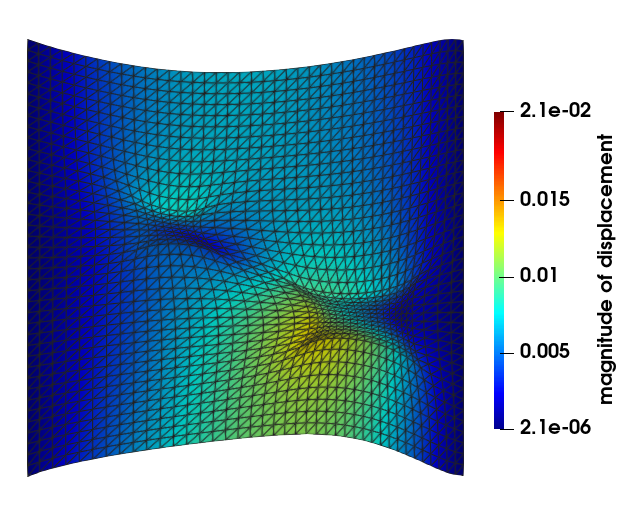}}
\subfigure[$t=1000$]{\includegraphics[width=0.29\linewidth]{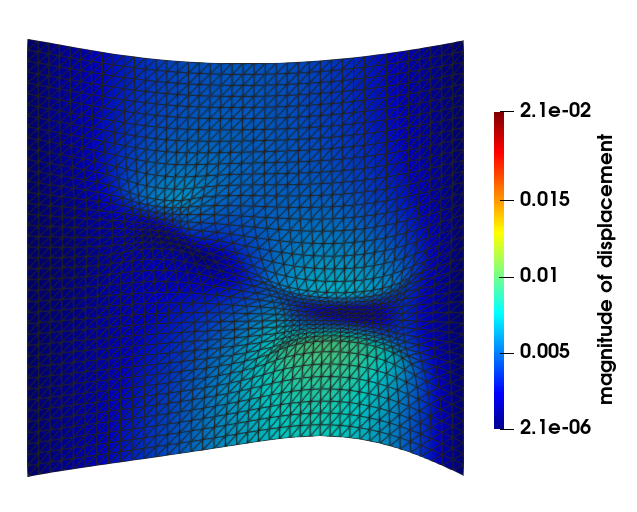}}
\caption{Heterogeneous inclusions problem for Case 2: magnitude of displacement at $t=50, 125,  250,  375,  500$  and $1000$ days.}
\label{fig:blocks_case2_displacement}
\end{figure}

\begin{figure}[H]
\centering     
\subfigure[Case 1]{\includegraphics[width=2.9in]{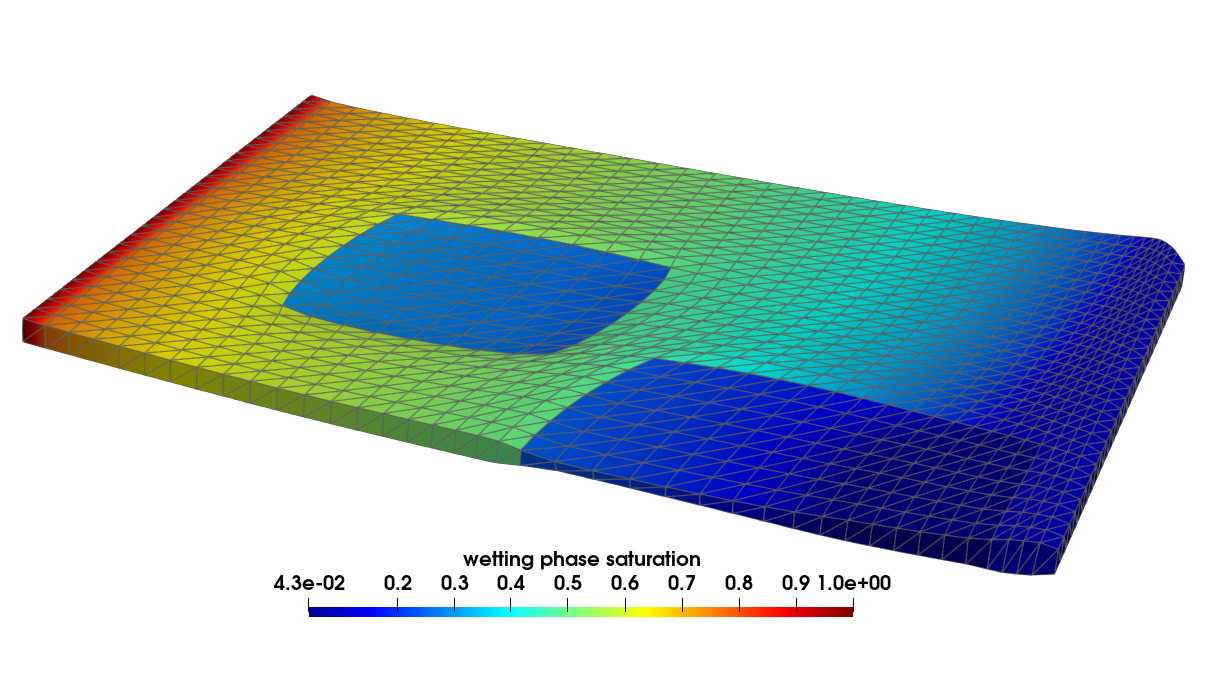}}
\subfigure[Case 2]{\includegraphics[width=2.9in]{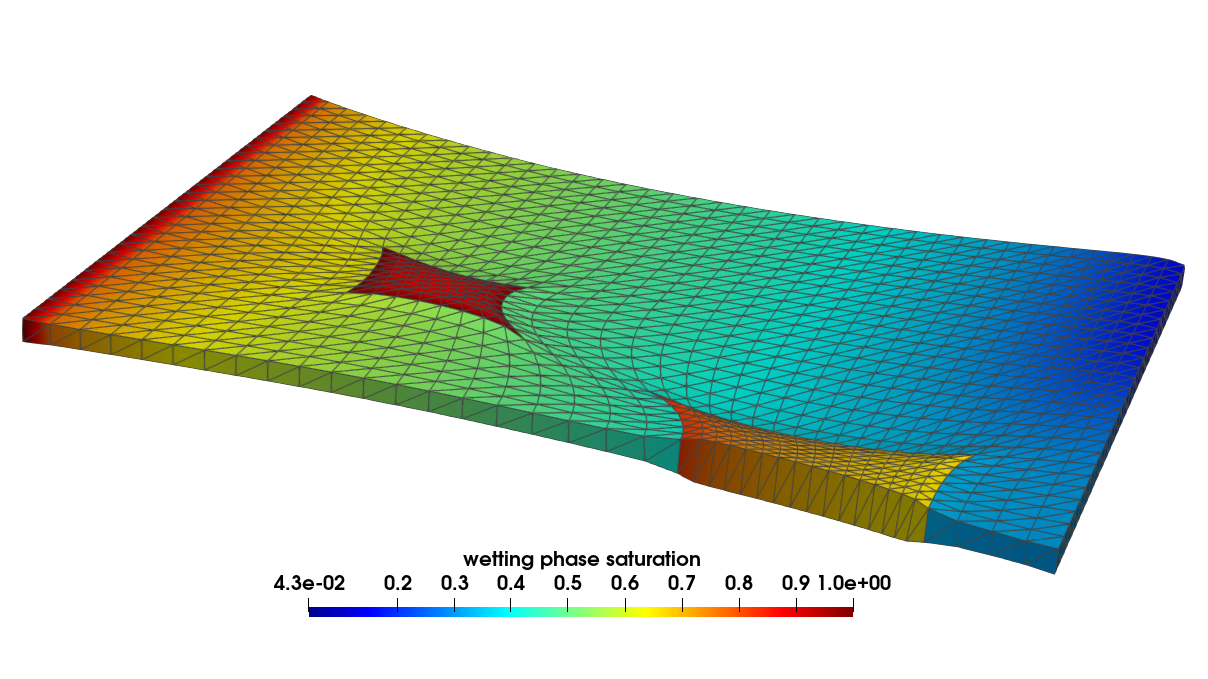}}
\caption{Heterogeneous inclusions problem: 3D views of a cross-section of the domain along the line $y=35$m. Contours correspond to
the wetting phase saturation at $t=500$  days.}
\label{fig:3dzdirection}
\end{figure}

\subsection{Porous Medium Subjected to Load}

The numerical examples in this section show the impact of loading on the wetting phase propagation in the medium as it undergoes deformations. 
The domain $\Omega = [0,100]\times [0,100]\times [0,5]$ m$^3$ is partitioned into  $2400$ tetrahedra.  
Boundary conditions for flow and displacement are described in Fig.~\ref{fig:loading_traction_y_setup}. Dirichlet data
is prescribed for the pressures ($p_{w\D} = 195000$ Pa and $p_{o\D} = 200000$ Pa) on the left side of the boundary and no flow
is imposed on the remainder of the boundary. Two different loading scenarios are considered: first a non-zero traction boundary condition in the $y$-direction is imposed on the top side ($\mathbf{g}_\mathbf{u} =  (0,-r,0)$); this case is referred to as $y-$load. Second a load is imposed in the $x$-direction on the left side of the domain ($\mathbf{g}_\mathbf{u} = (r,0,0)$); this case is referred to as $x-$load.  In both cases, the bottom side is fixed, with zero Dirichlet boundary condition for the displacement. Zero traction is imposed on the remainding of the boundary. The load increases linearly in time:
\[
r(t) = 50000 \frac{t}{T}.
\]

\begin{figure}[H]
\subfigure[flow BCs \label{fig:loadingBCflow}]{
\includegraphics[width=0.3\linewidth]{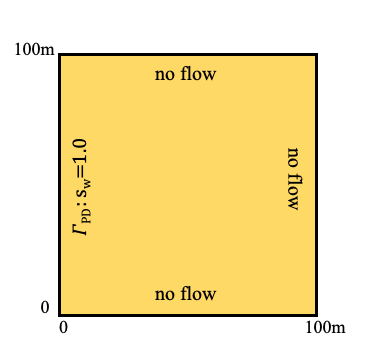}}
\subfigure[$y$-load BCs \label{fig:loadingBCdisp-y}]{
\includegraphics[width=0.3\linewidth]{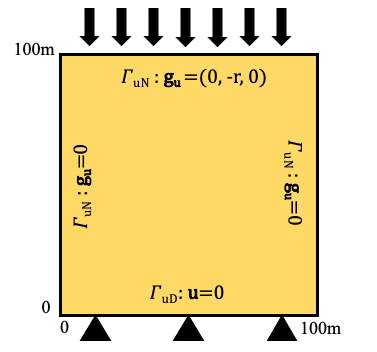}}
\subfigure[$x$-load BCs \label{fig:loadingBCdisp-x}]{
\includegraphics[width=0.3\linewidth]{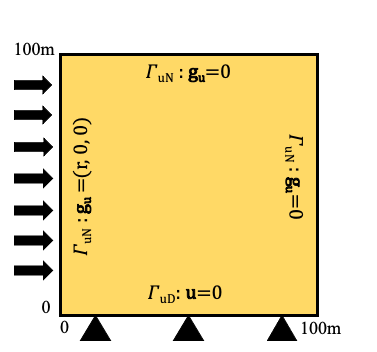}}
\caption{Set-up of boundary conditions for flow  and geomechanics.}
\label{fig:loading_traction_y_setup}
\end{figure}

The following physical parameters are used:
\[
K_w=K_{o}= 10^{4}, \quad K=8.0\times 10^{-11} \mbox{m}^2, \quad \lambda=\mu=4\times 10^5 \, \mbox{Pa},\quad K_s=666666 \, \mbox{Pa}.
\]
We choose smaller  values for the bulk moduli to show the impact of the loading on fluid and solid phases. 
The final time is $T=500$ days and the other computational parameters are 
as in \eqref{eq:comppar}. 

We first show the contours for wetting phase saturation and pressure at $250$, $375$ and $500$ days in Fig.~\ref{fig:yloadsatpres}
for the case of vertical load.
As the load increases, the domain is compressed in the $y-$direction as expected and slightly expanded in the $x-$direction. Even though the pressure gradient is mostly in the $x-$direction, the deformation of the medium creates a small pressure gradient
in the $y-$direction near the load boundary. The wetting phase floods the top part of the domain slower than the bottom part. 
\begin{figure}[H]
\centering
\subfigure[$S_w, \, t=250$]{\includegraphics[height=0.25\linewidth,keepaspectratio]{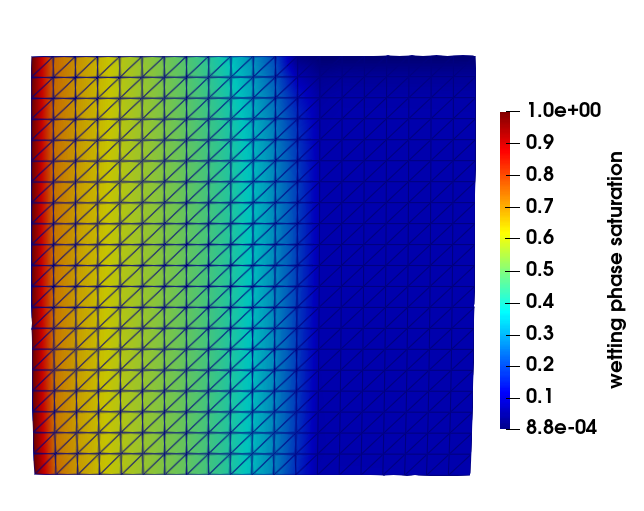}}
\subfigure[$S_w, \, t=375$]{\includegraphics[height=0.25\linewidth,keepaspectratio]{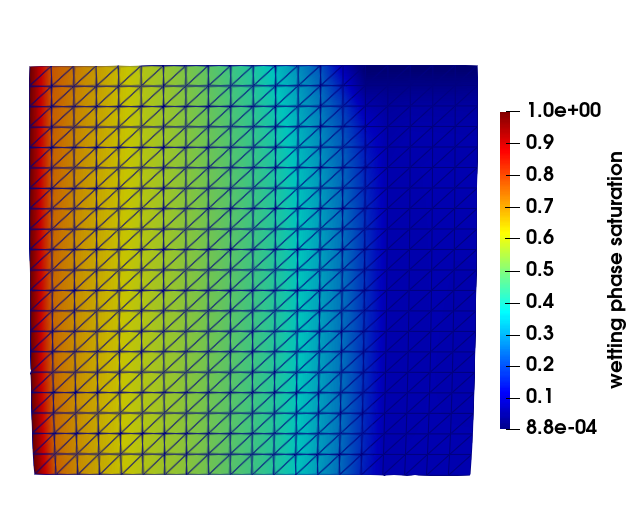}}
\subfigure[$S_w, \, t=500$]{\includegraphics[height=0.25\linewidth,keepaspectratio]{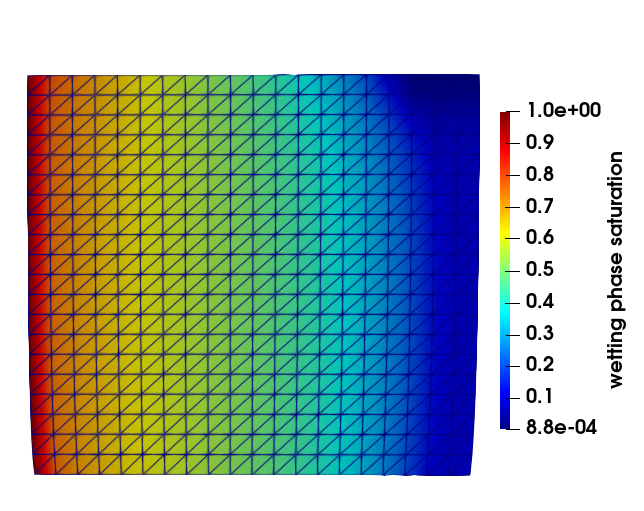}}\\
\subfigure[$P_w, \, t=250$]{\includegraphics[height=0.25\linewidth,keepaspectratio]{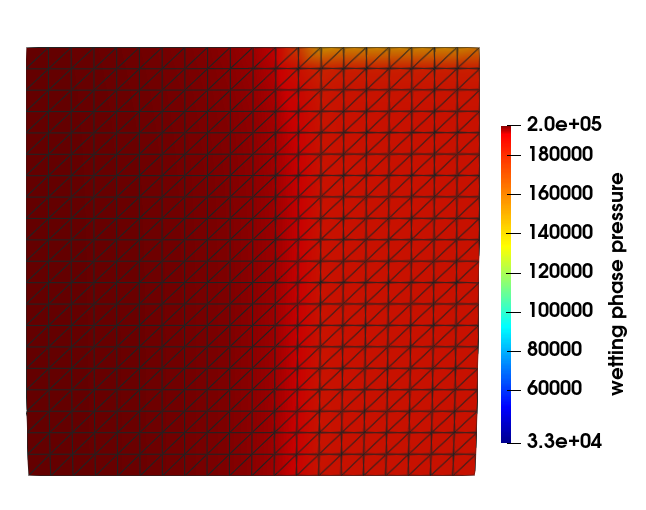}}
\subfigure[$P_w, \, t=375$]{\includegraphics[height=0.25\linewidth,keepaspectratio]{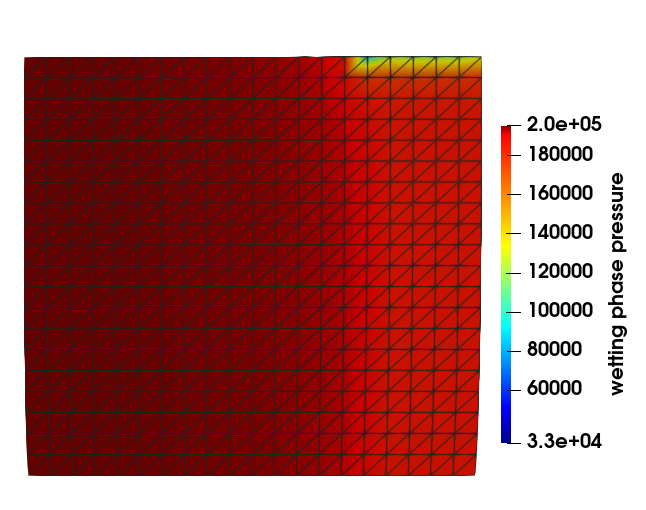}} 
\subfigure[$P_w, \, t=500$]{\includegraphics[height=0.25\linewidth,keepaspectratio]{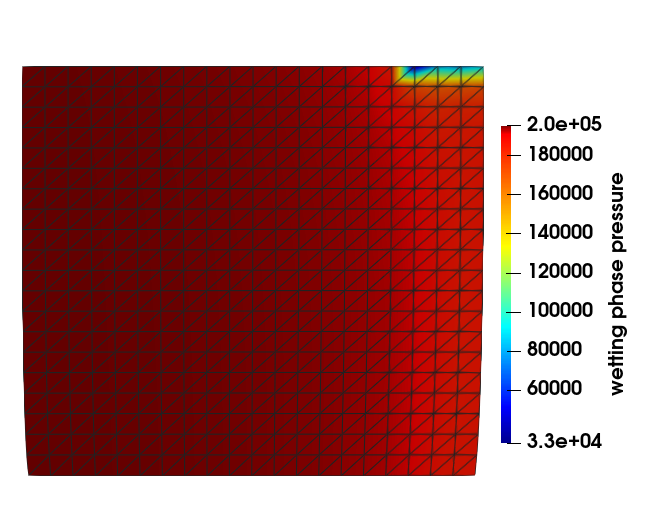}} 
\caption{Case of $x$-load: wetting phase saturation and pressure contours at different times}
\label{fig:yloadsatpres}
\end{figure}
To better see this, we extract the saturation profiles at $250, 375$ and $500$ days along three horizontal lines (see Fig.~\ref{fig:yload-profiles3D}). The location of the front is also indicated in the figure.  Near the top side of the domain, the saturation front is lagging behind by ten meters.
\begin{figure}[H]
\centering
\subfigure{\includegraphics[width=0.75\linewidth]{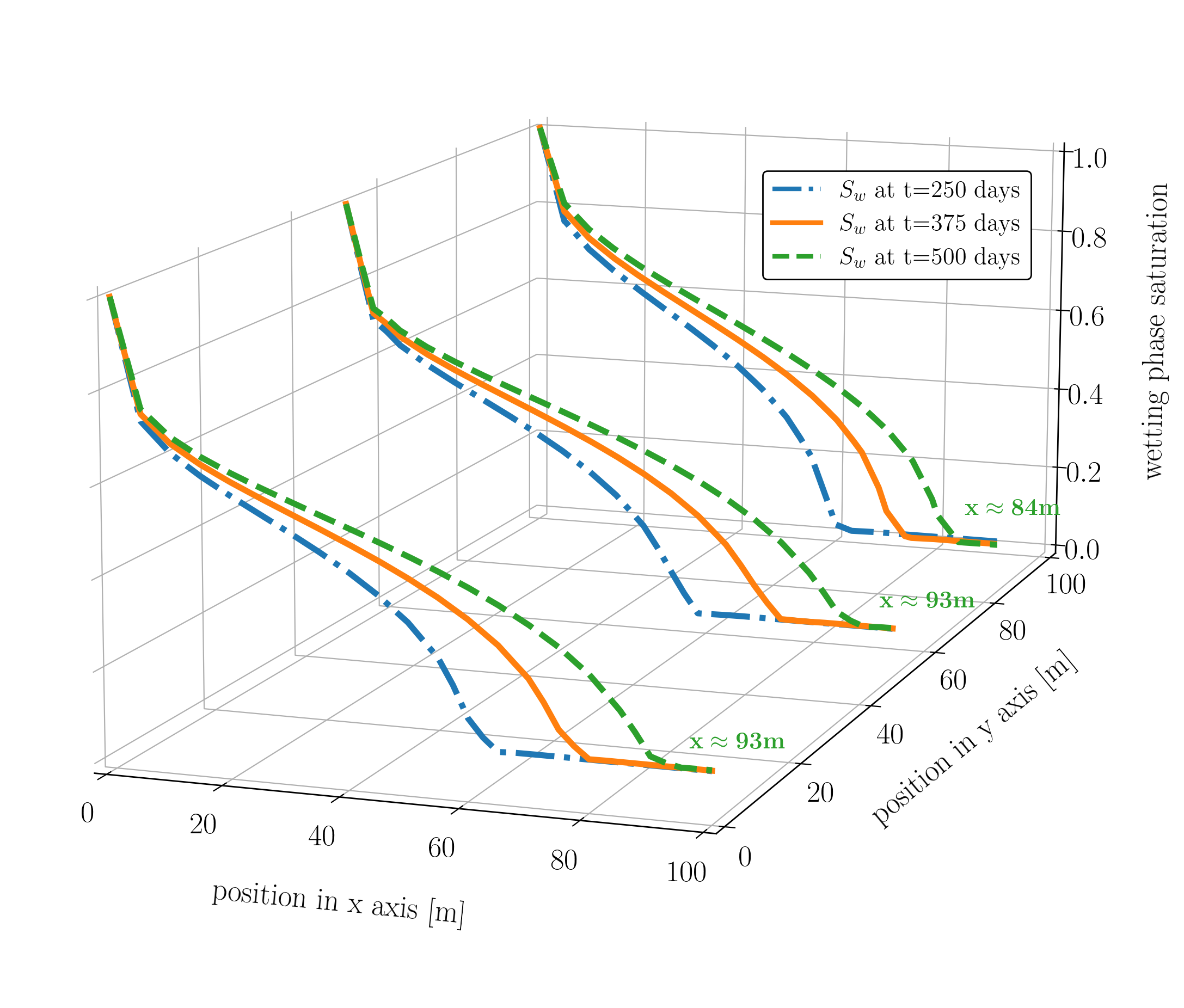}}
\caption{Case of $y$-load: wetting phase saturation profiles along $y=0$, $50$ and $100$ m at $t=250$, $375$ and $500$ days.}
\label{fig:yload-profiles3D}
\end{figure}
Next, we show the saturation and pressure contours for the case of $x-$load in Fig.~\ref{fig:xloadsatpres}.
\begin{figure}[H]
\centering
\subfigure[$S_w, \, t=250$]{\includegraphics[height=0.25\linewidth,keepaspectratio]{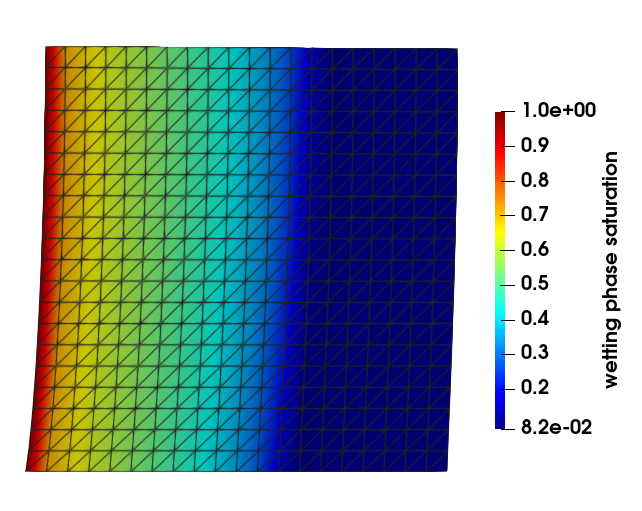}}
\subfigure[$S_w, \, t=375$]{\includegraphics[height=0.25\linewidth,keepaspectratio]{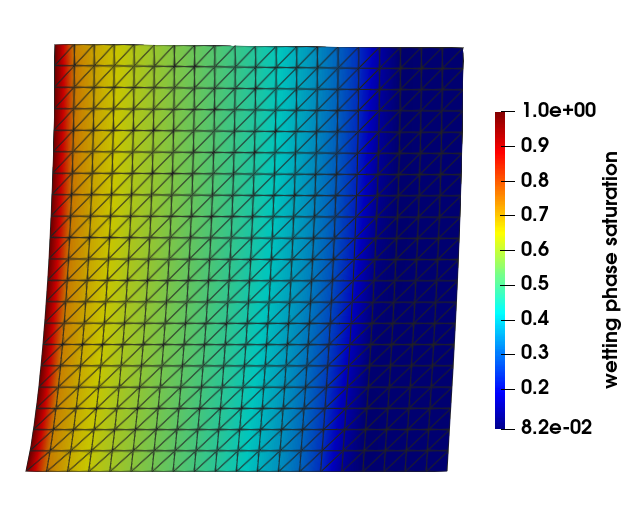}}
\subfigure[$S_w, \, t=500$]{\includegraphics[height=0.25\linewidth,keepaspectratio]{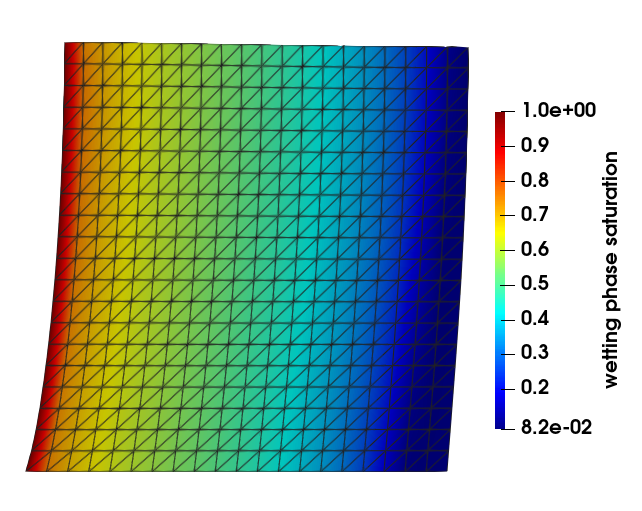}}\\
\subfigure[$P_w, \, t=250$]{\includegraphics[height=0.25\linewidth,keepaspectratio]{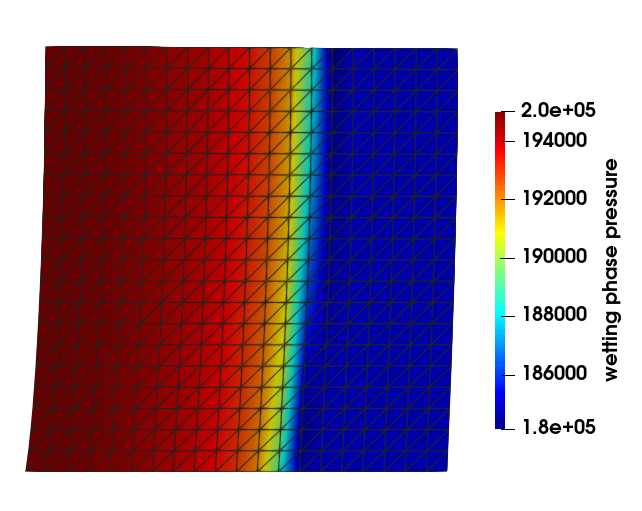}}
\subfigure[$P_w, \, t=375$]{\includegraphics[height=0.25\linewidth,keepaspectratio]{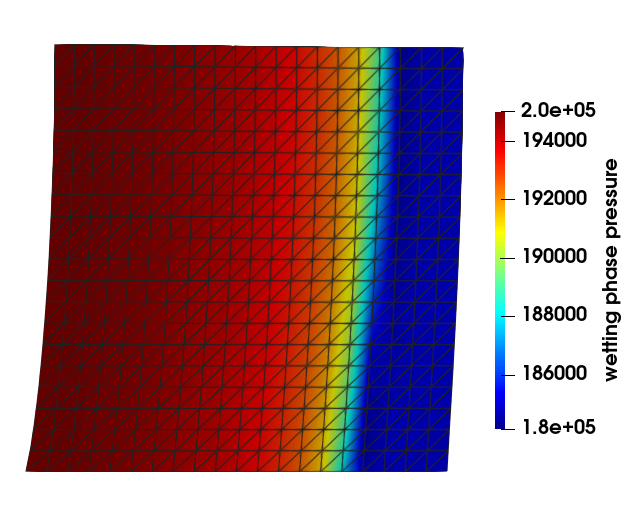}} 
\subfigure[$P_w, \, t=500$]{\includegraphics[height=0.25\linewidth,keepaspectratio]{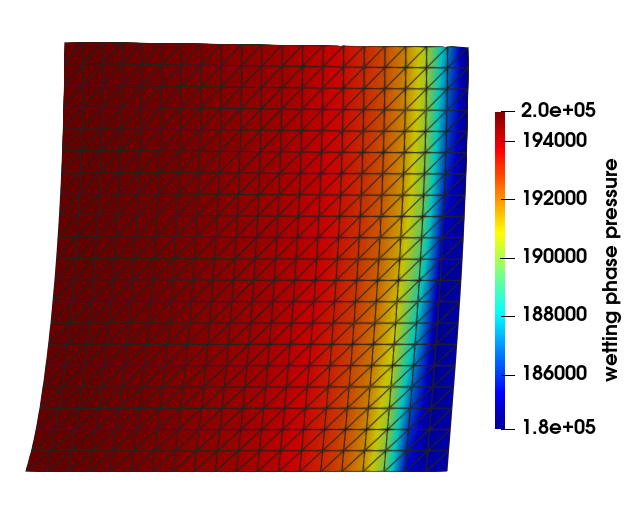}} 
\caption{Case of $x$-load: wetting phase saturation and pressure contours at different times.}
\label{fig:xloadsatpres}
\end{figure}
In this loading scenario, the deformation of the medium is mostly in the $x-$direction, with the top part of the domain deforming the most because of the constraint of zero displacement at the bottom side. We also observe that the displacement of the domain is in the same direction than the propagation of the wetting phase saturation.  This yields a faster saturation front in the top part of the domain. Fig.~\ref{fig:xload-profiles3D} shows the saturation profiles along three horizontal lines.  After $500$ days, the saturation front at the top side reaches about 97 meters which is 3 and 8 meter further than other two locations.

\begin{figure}[H]
\centering
\subfigure{\includegraphics[width=0.75\linewidth]{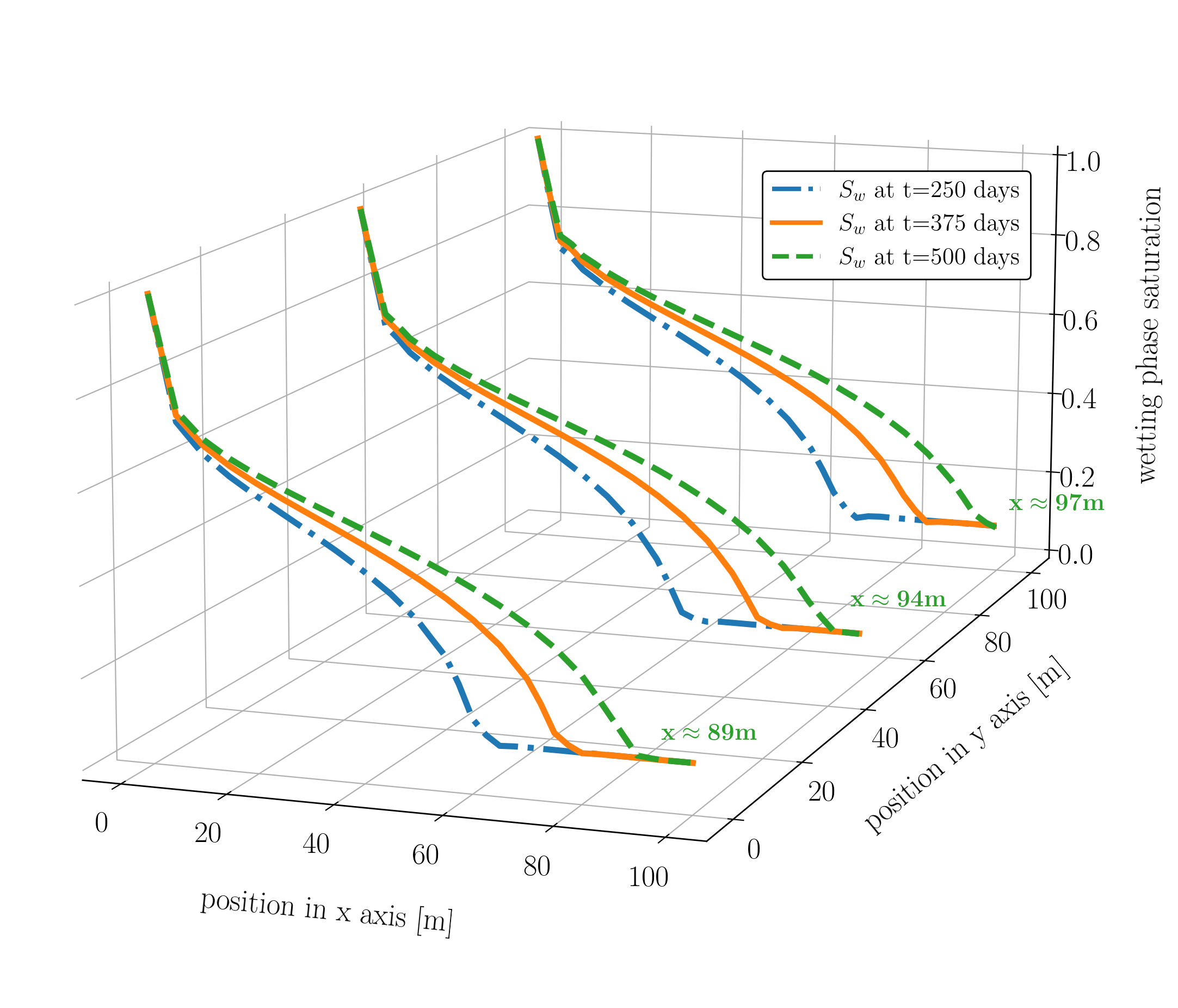}}
\caption{Case of $x$-load: wetting phase saturation profiles along $y=0$, $50$ and $100$ m at $t=250$, $375$ and $500$ days.}
\label{fig:xload-profiles3D}
\end{figure}

For a better comparison between these two types of loading, we show the contours of the $x-$ and $y-$ components of the displacement 
at the final time in Fig.~\ref{fig:xydispT}. Under the $y-$load, the medium is compressed vertically and stretched horizontally whereas
under the $x-$load, the medium deforms mostly along the direction of the flow except for the fixed bottom boundary.
\begin{figure}
\centering
\subfigure[$y$-load: $U_x$]{\includegraphics[height=0.22\textheight,keepaspectratio]{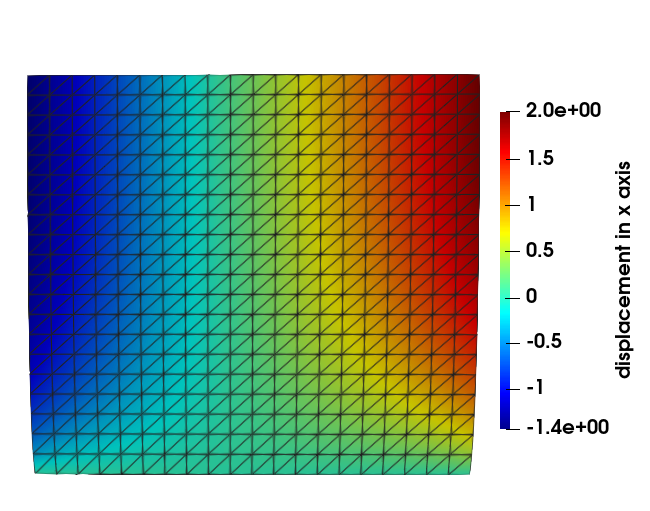}} 
\subfigure[$y$-load: $U_y$]{\includegraphics[height=0.22\textheight,keepaspectratio]{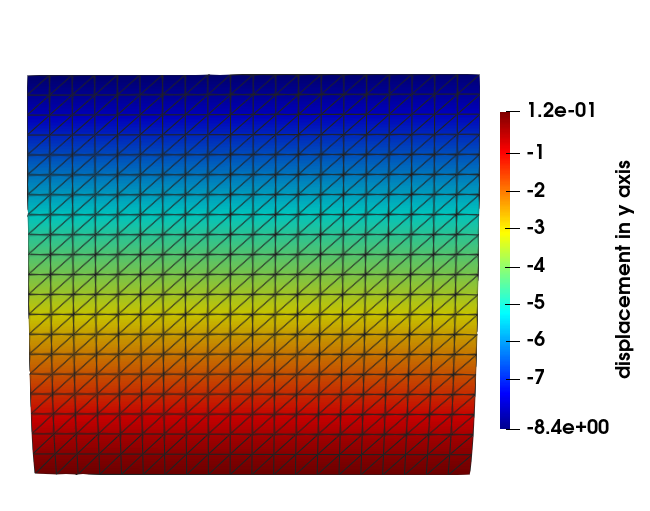}}\\
\subfigure[$x$-load: $U_x$]{\includegraphics[height=0.22\textheight,keepaspectratio]{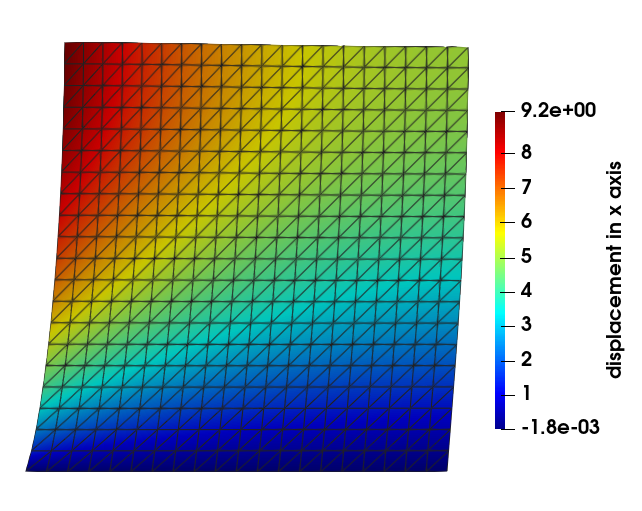}} 
\subfigure[$x$-load: $U_y$]{\includegraphics[height=0.22\textheight,keepaspectratio]{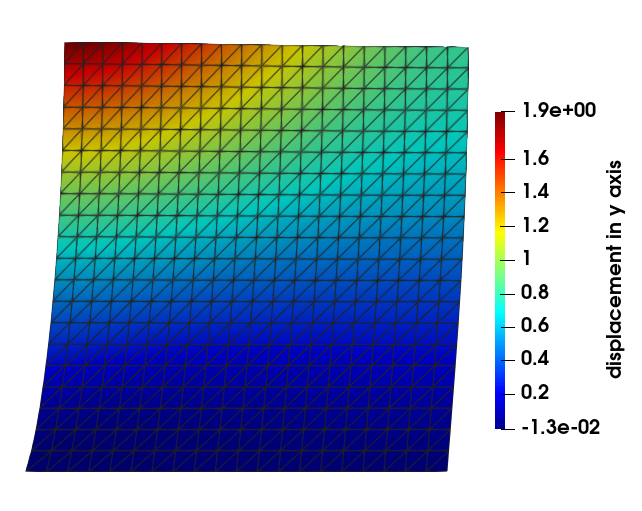}}
\caption{Contours of $x$ and $y$ components of displacement at $500$ days.}
\label{fig:xydispT}
\end{figure}

Finally, we now compare the effect of no loading  versus loading for both $y-$ and $x-$ loads.
To be precise, no loading means that zero
traction boundary condition ($\mathbf{g}_\mathbf{u} = {\bf 0}$) is prescribed  on the boundary except for the bottom boundary where zero displacement is imposed. 
Fig.~\ref{fig:loadvnoload} shows the wetting phase saturation profiles extracted along the top and bottom sides  at $250$, $375$ and $500$ days.  
On the top boundary, we observe that the saturation front advances faster in the $x-$load than in the zero traction case and the $y-$load
yields the slowest saturation front. This is expected since the loading direction for the $x-$load is the same as the flow direction. 
On the bottom boundary, overall there are less differences between the profiles for the three loading scenarios because of the zero displacement constraint. This figure shows the impact of the nonlinearities in the problem on the fluid propagation.

\begin{figure}
\subfigure[Along $y=100$ m]{\includegraphics[width=0.5 \linewidth]{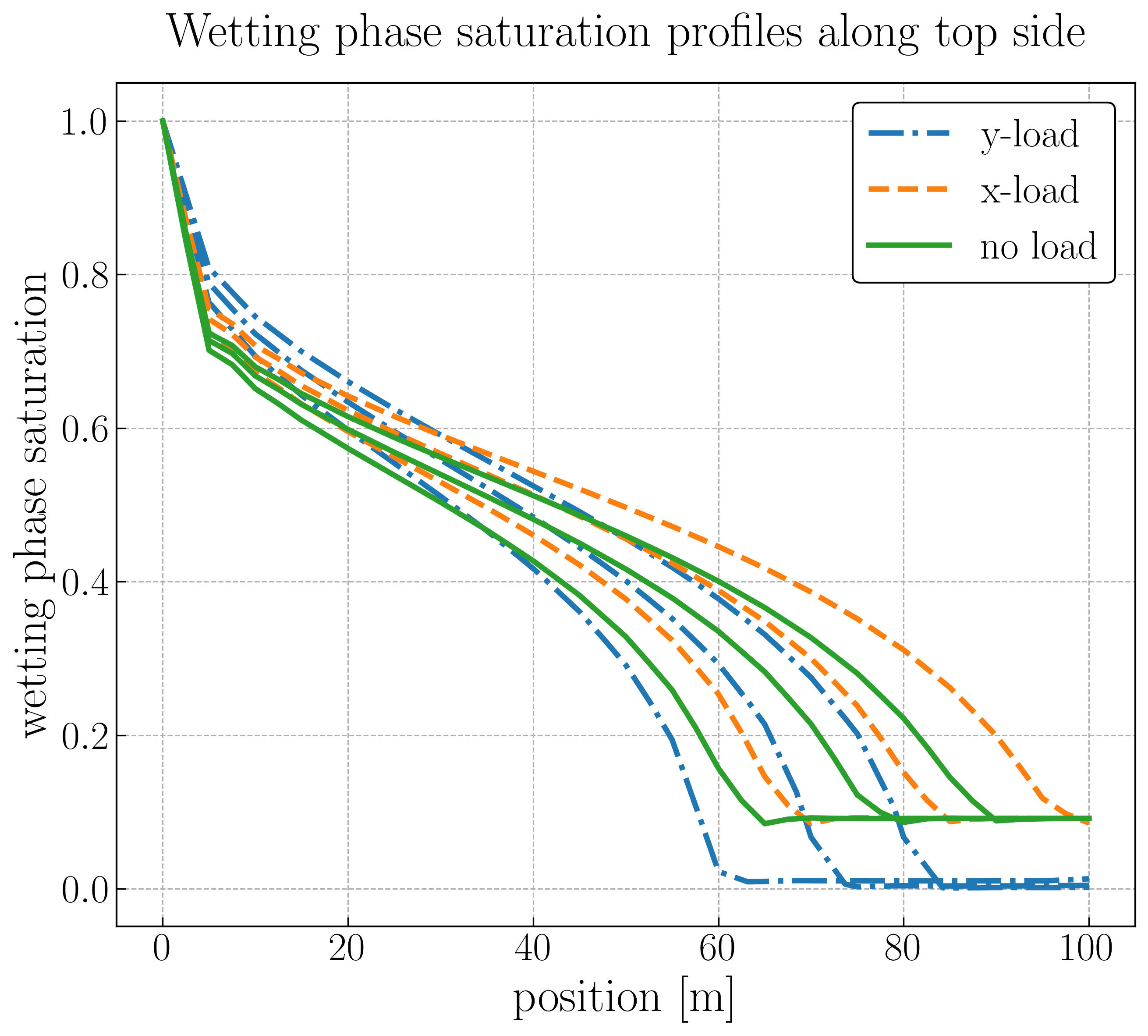}}
\subfigure[Along $y=0$ m]{\includegraphics[width=0.5 \linewidth]{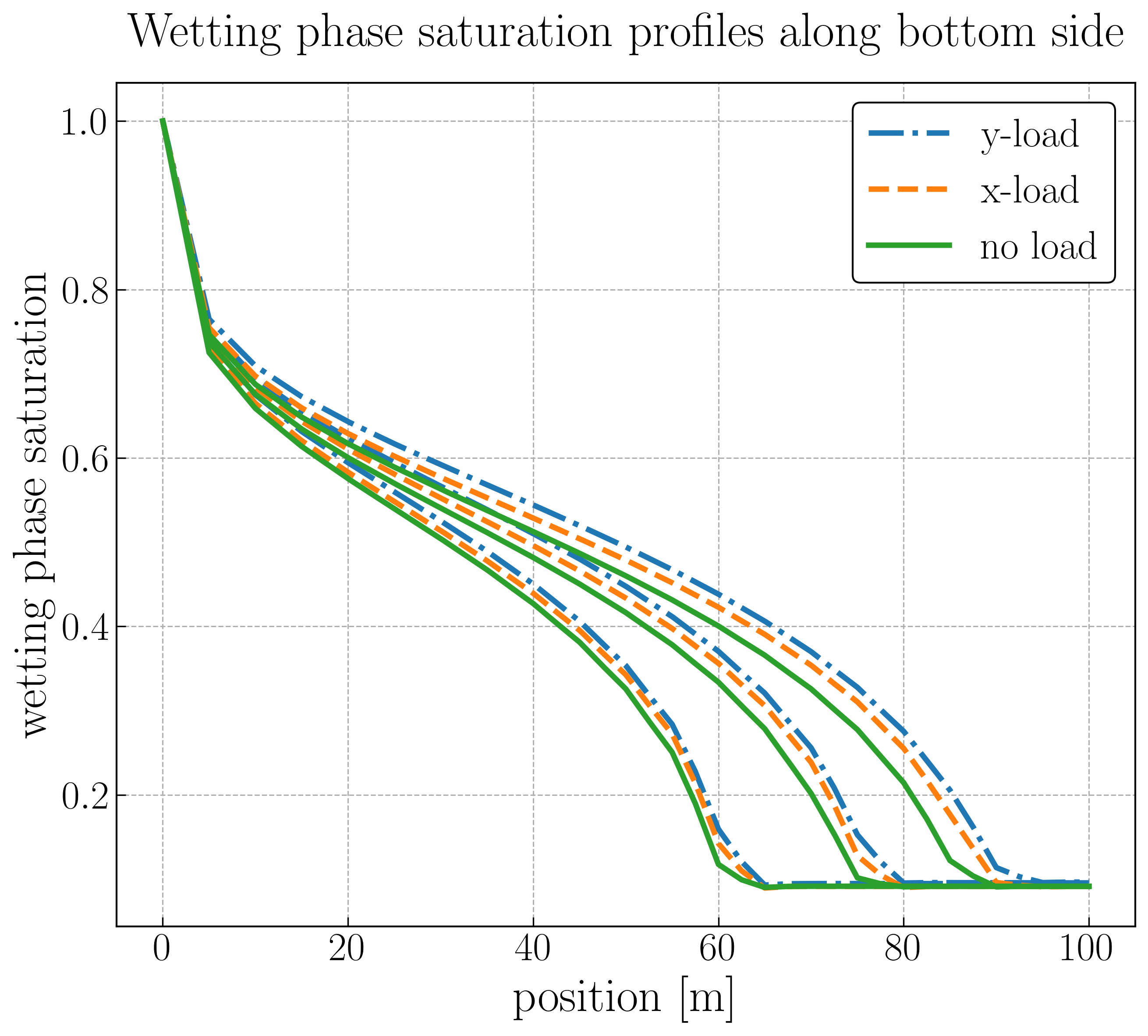}}
\caption{Wetting phase saturation profiles extracted along $y=100$m and $y=0$m at three different times: $250$, $375$ and $500$ days and
for different loading scenarios.}
\label{fig:loadvnoload}
\end{figure}

\subsection{Highly Heterogeneous Medium}

We apply the method to a porous medium where both porosity and permeability vary in space.  
The medium exhibits regions of high permeability (channels) surrounded by regions of low permeability and lower porosity. 
This example demonstrates the capability of the proposed method to handle large variations in permeability.
The domain $[0,80]\times[0,80] \times [0,7.5]$ consists of three stacked horizontal layers of height $2.5$ m.  The mesh contains $18432$ tetrahedra.  
The porosity field for the three layers is shown in Fig.~\ref{fig:speporosity} and the permeability field in logarithmic scale is shown in Fig.~\ref{fig:speperm}.  The data are extracted from the SPE10 porosity and permeability fields; they correspond to a section of layer 43, 44 and 45 in the SPE10 model \cite{SPE10reference}. 
Dirichlet data is prescribed for the pressures ($p_{w\D} = 1950000$ Pa and $p_{o\D} = 2000000$ Pa) on the left side of the boundary and no flow is imposed on the remainder of the boundary.  The entry pressure is $p_d=50000$ Pa.  
The computational parameters are:
\begin{equation}
\tau=20 \mbox{ days}, \quad \tau_0 = 0.2\mbox{ days}, \quad \sigma_p = 800, \quad \sigma_\bfu = 800, \quad \gamma = 10^5,\quad T=4000 \mbox{ days}.
\end{equation}

\begin{figure}[H]
\centering
\includegraphics[width=0.32\linewidth]{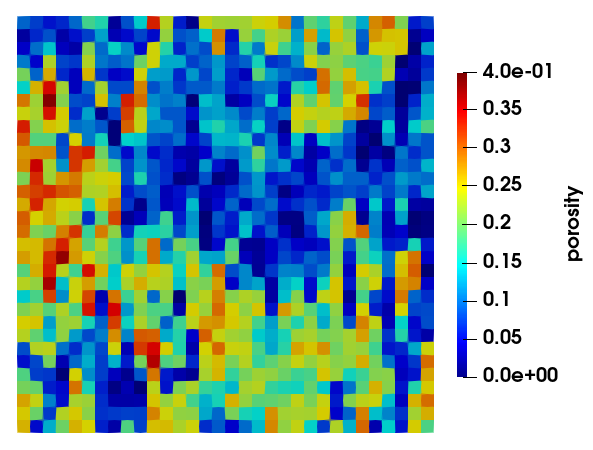}
\includegraphics[width=0.32\linewidth]{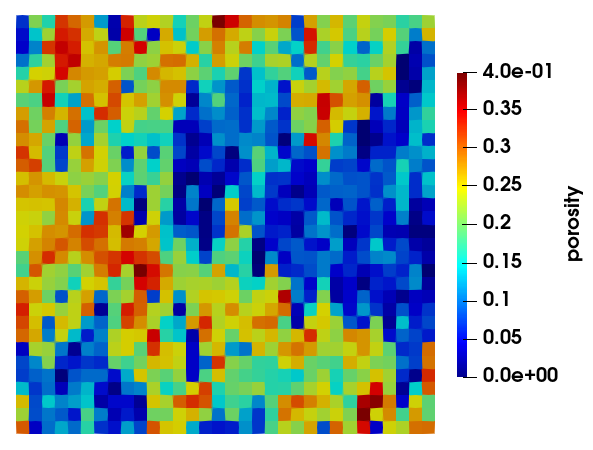}
\includegraphics[width=0.32\linewidth]{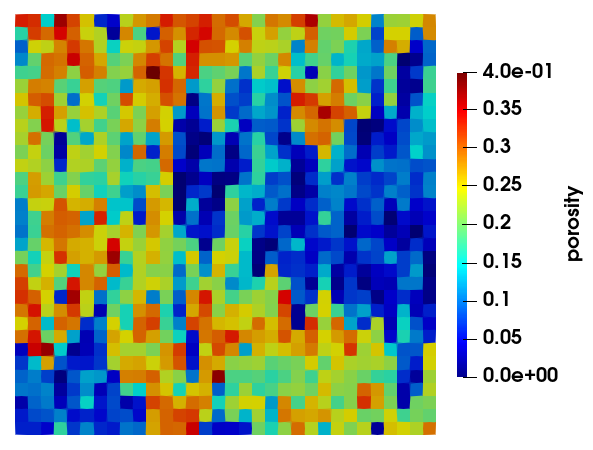}
\caption{Heterogeneous medium: porosity field for bottom layer (left), middle layer (center) and top layer (right).}
\label{fig:speporosity}
\end{figure}
\begin{figure}[H]
\centering
\includegraphics[width=0.32\linewidth]{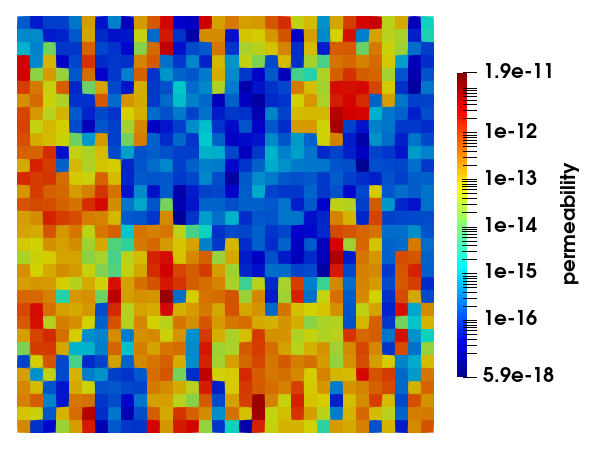}
\includegraphics[width=0.32\linewidth]{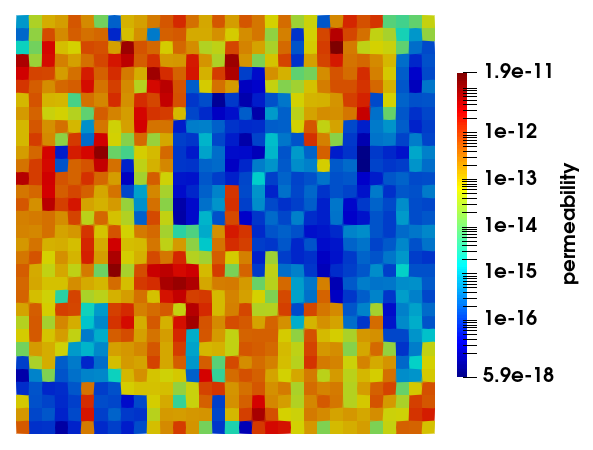}
\includegraphics[width=0.32\linewidth]{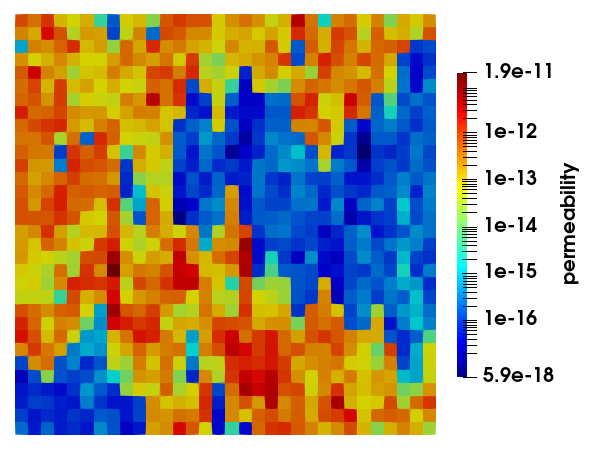}
\caption{Heterogeneous medium: permeability field in log scale for bottom layer (left), middle layer (center) and top layer (right).}
\label{fig:speperm}
\end{figure}
 
Fig.~\ref{fig:3Dsat} shows the wetting phase saturation in the three-dimensional domain at time $t= 1000$ days; values of the saturation above 0.21 are shown only.  We observe a non-uniform saturation front.  The deformation of the domain is magnified by a scaling factor of $100$ for visualization. 

\begin{figure}[H]
\centering
\includegraphics[width=0.6\linewidth]{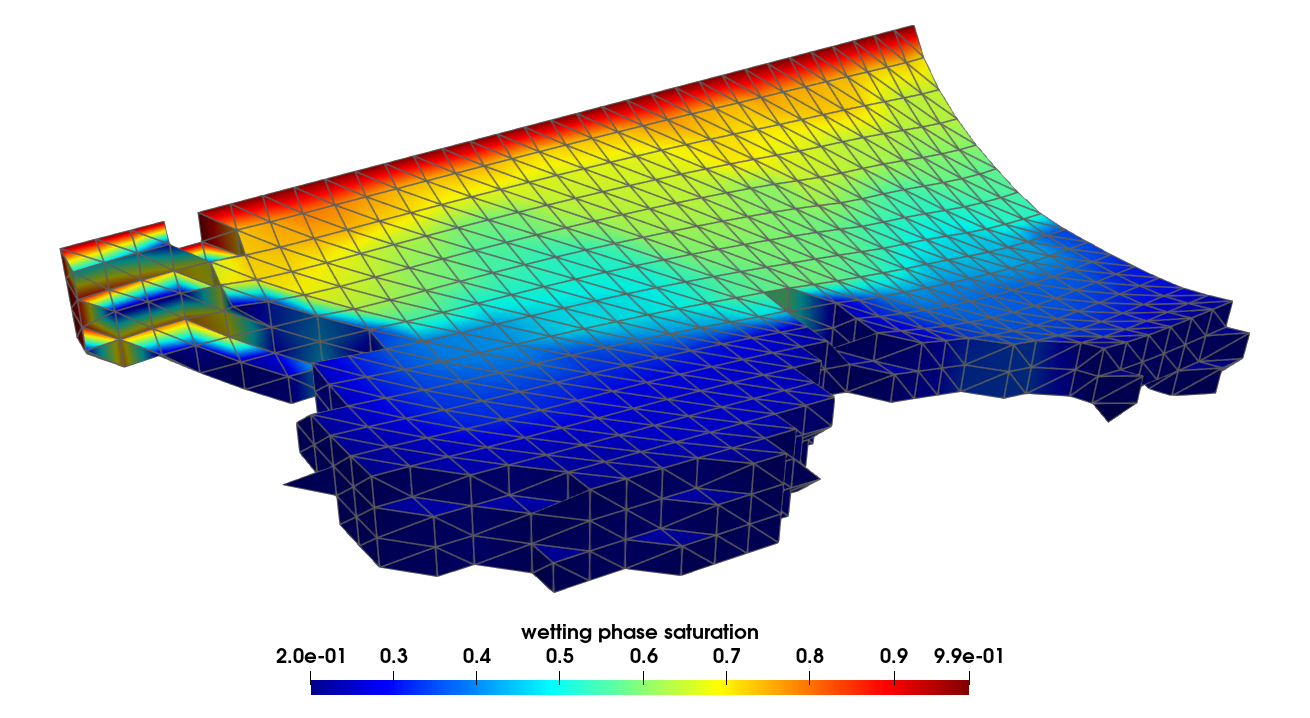}
\caption{Two-phase flow in highly heterogeneous medium. Threshold plot of wetting phase saturation where the value is greater than 0.21 at $t=1000$ days,  displacement scaled up by 100 for visualization.}
\label{fig:3Dsat}
\end{figure}

The wetting phase saturation and pressure at $4000$ days are shown in each of the three layers in Fig.~\ref{fig:spelayersat}. 
For visualization purposes, each component of the numerical approximation of the displacement has been scaled by $100$.
Due to the heterogeneous permeability and porosity, we observe differences in the pressure and saturation contours at each layer.
This simulation shows the effect of three-dimensional heterogeneities in the propagation of the wetting phase through the medium.

\begin{figure}[H]
\centering
\subfigure[$S_w$, top layer]{\includegraphics[width=0.32\linewidth]{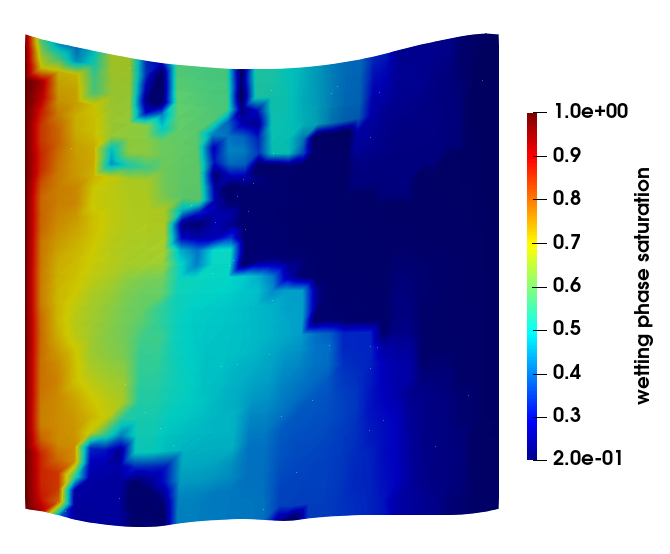}}
\subfigure[$P_w$, top layer]{\includegraphics[width=0.32\linewidth]{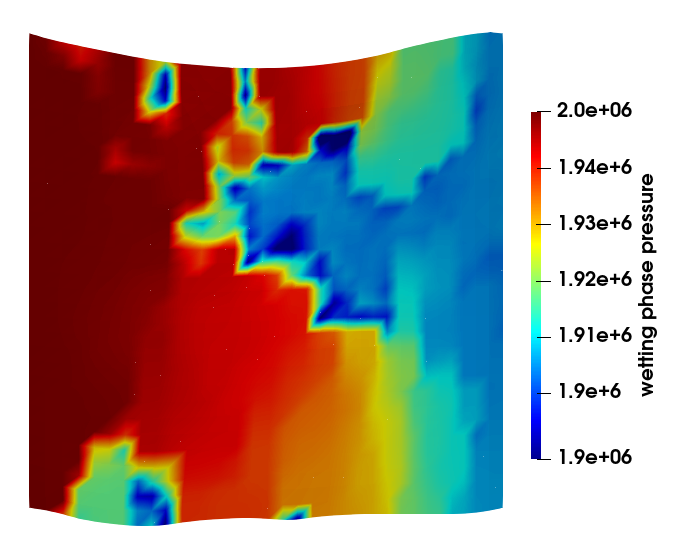}}\\
\subfigure[$S_w$, middle layer]{\includegraphics[width=0.32\linewidth]{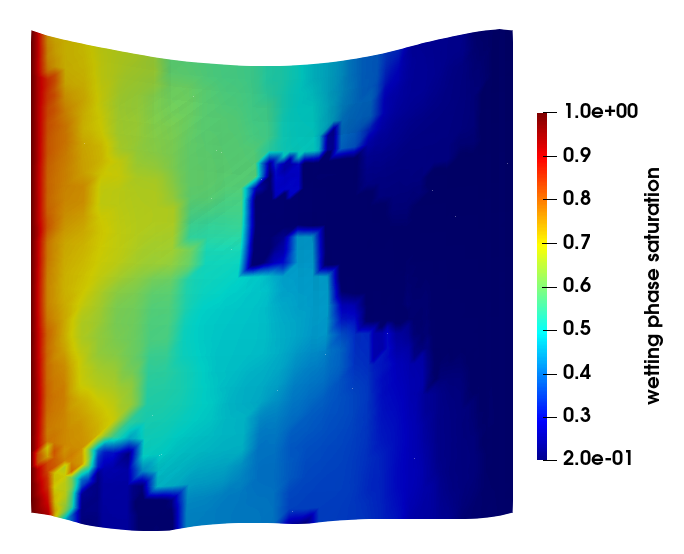}}
\subfigure[$P_w$, middle layer]{\includegraphics[width=0.32\linewidth]{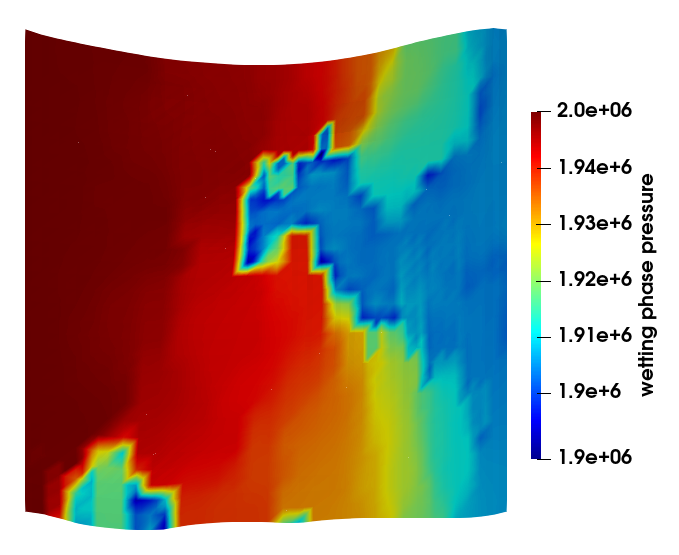}}\\
\subfigure[$S_w$, bottom layer]{\includegraphics[width=0.32\linewidth]{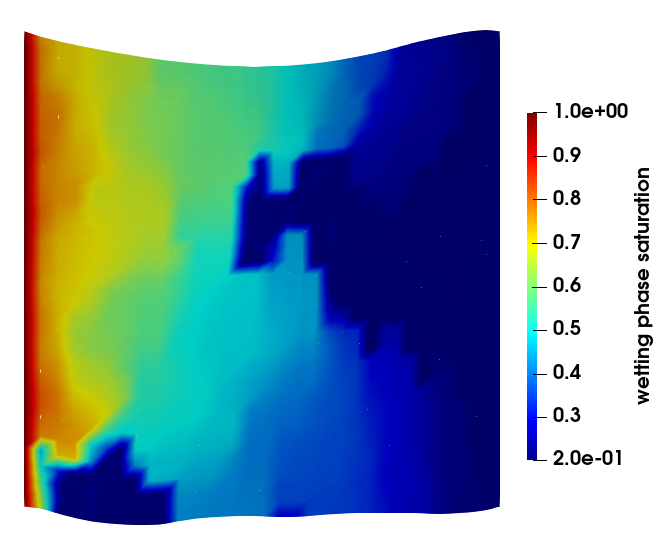}}
\subfigure[$P_w$, bottom layer]{\includegraphics[width=0.32\linewidth]{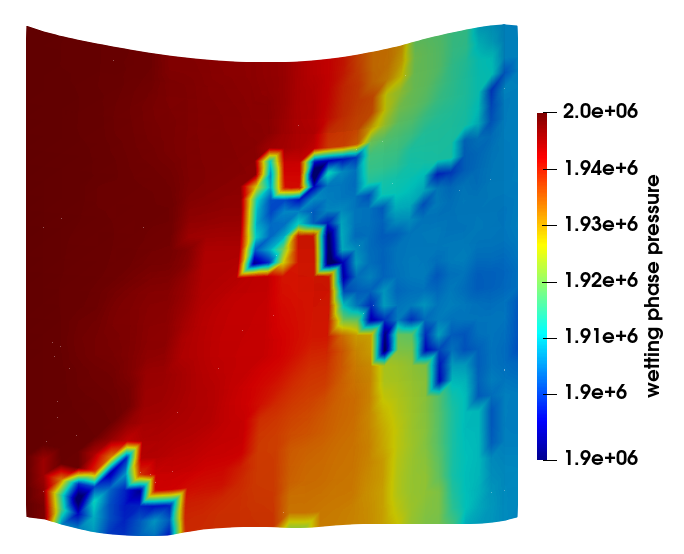}}
\caption{Two-phase flow in highly heterogeneous medium.  Left column: wetting phase saturation in the three layers. Right column: wetting phase pressure,  at t=4000 days.}
\label{fig:spelayersat}
\end{figure}

The contours for the x-, y-, and z-components of the displacement are shown in Fig.~\ref{fig:spe10disp}. The displacement is five
times larger in the flow direction, which is consistent with the choice of the boundary conditions. Because of the coupling between flow
and geomechanics, the displacement components vary in time as the medium is flooded by the wetting phase.
\begin{figure}[H]
\centering
\subfigure[$U_x, t=500$]{\includegraphics[width=0.32\linewidth]{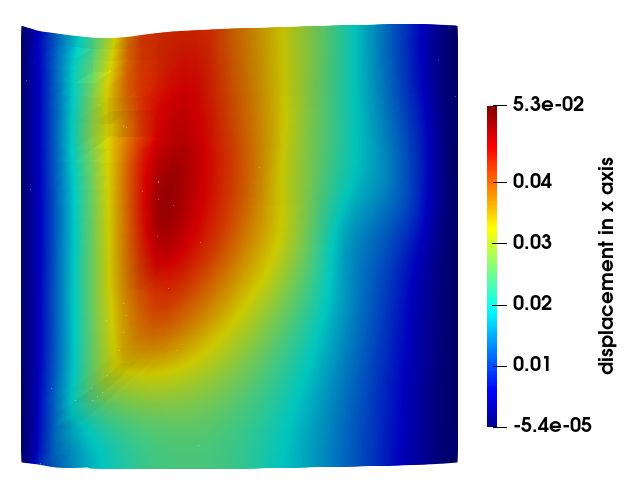}}
\subfigure[$U_y, t=500$]{\includegraphics[width=0.32\linewidth]{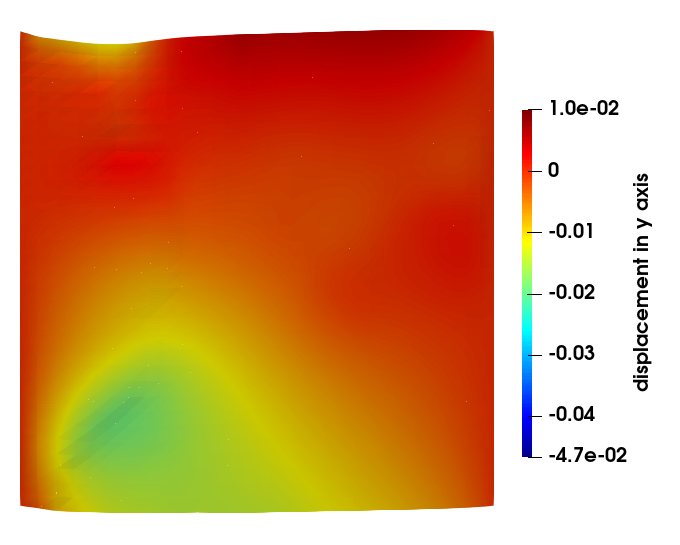}}
\subfigure[$U_z, t=500$]{\includegraphics[width=0.32\linewidth]{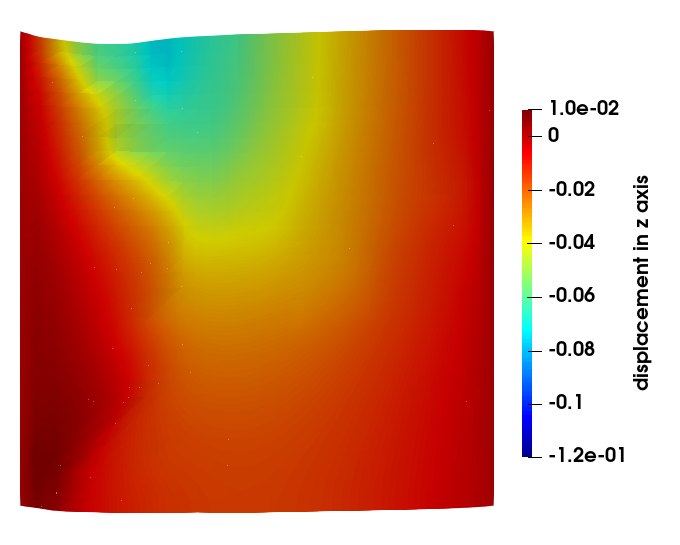}}\\
\subfigure[$U_x, t=2000$]{\includegraphics[width=0.32\linewidth]{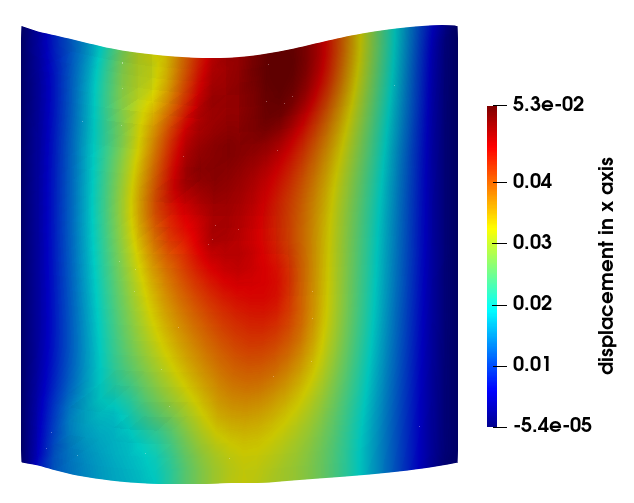}}
\subfigure[$U_y, t=2000$]{\includegraphics[width=0.32\linewidth]{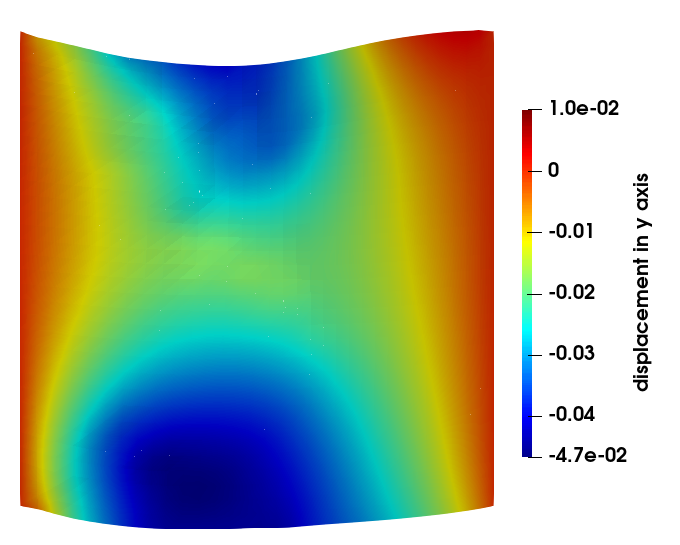}}
\subfigure[$U_z, t=2000$]{\includegraphics[width=0.32\linewidth]{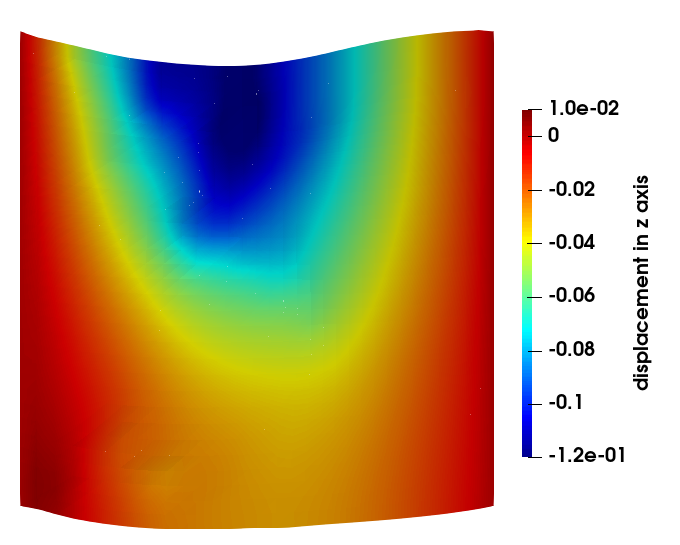}}\\
\subfigure[$U_x, t=4000$]{\includegraphics[width=0.32\linewidth]{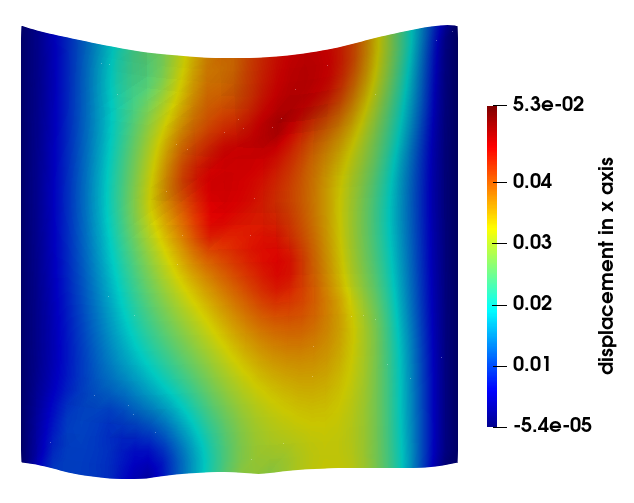}}
\subfigure[$U_y, t=4000$]{\includegraphics[width=0.32\linewidth]{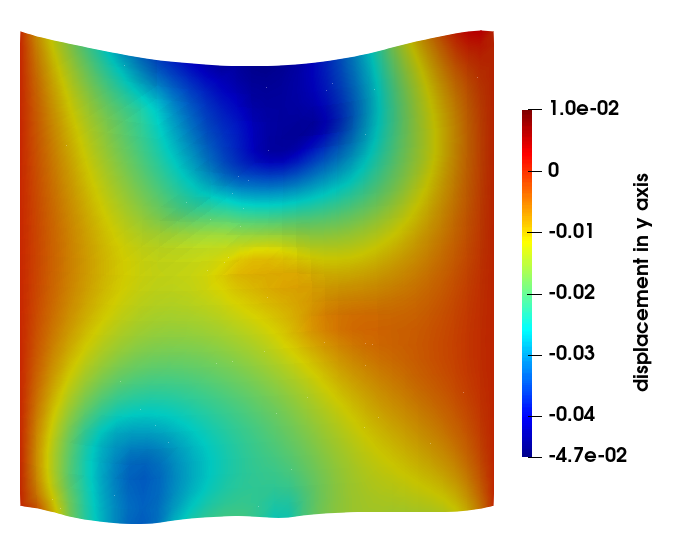}}
\subfigure[$U_z, t=4000$]{\includegraphics[width=0.32\linewidth]{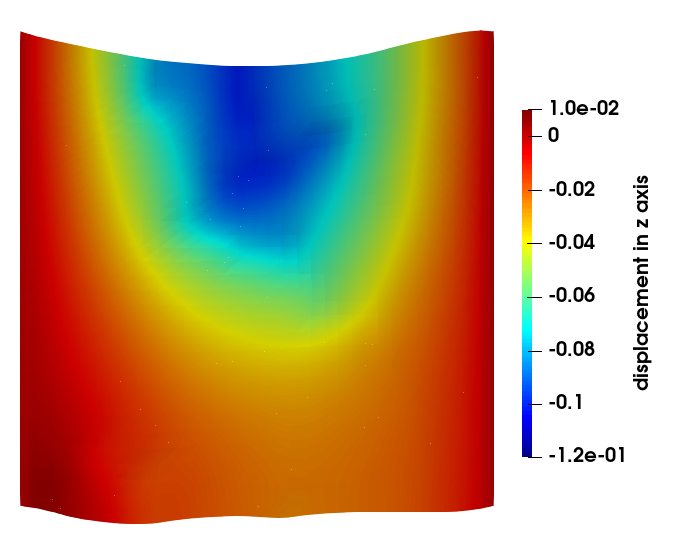}}
\caption{Two-phase flow in highly heterogeneous medium.  Contours of displacement components in top layer at different times:
x-component (left column), y-component (center column) and z-component (right column).}
\label{fig:spe10disp}
\end{figure}

\section{Conclusions}
We have presented an accurate and robust numerical method for solving the coupled two-phase flow and geomechanics equations in porous media.
The method is sequentially implicit, therefore computationally less expensive than a fully implicit scheme. The sequential scheme is stable due to
stabilization terms added to the displacement equation.  The method is validated on three-dimensional benchmark problems and the numerical results confirm
the stability, robustness and accuracy of the proposed scheme for various heterogeneous porous media.

\bibliography{main}

\end{document}